\def\figurespath{figs}
\newcommand*{\includetikzgraphics}[2][]{%
  \includegraphics[#1]{#2.pdf}%
}
\renewcommand{\todo}[2][]{\tikzexternaldisable\@todo[#1]{#2}\tikzexternalenable}
\newcommand*{\defeq}{\mathrel{\vcenter{\baselineskip0.5ex \lineskiplimit0pt
                     \hbox{\scriptsize.}\hbox{\scriptsize.}}}%
                     =}
\pgfplotsset{
  colormap={myhot}{
    rgb=(0,0,0)      
    rgb=(0.3,0,0.3)  
    rgb=(0.7,0,0)    
    rgb=(1,0.8,0)    
    rgb=(1,1,1)      
  }
}
\pgfplotsset{compat=newest}
\pgfplotsset{every axis legend/.append style={%
cells={anchor=west}}
}
\tikzset{>=stealth'}
\spnewtheorem{assumption}{Assumption}{\it}{\rm}
\crefname{assumption}{assumption}{assumptions}
\Crefname{assumption}{Assumption}{Assumptions}
\Crefname{algorithm}{Algorithm}{Algorithms} 
\newcommand{\nf}{\nabla f}
\newcommand{\kf}{\kappa_f}
\newcommand{\knf}{\kappa_{\nabla}}
\newcommand{\hf}{\widehat{f}}
\newcommand{\hnf}{\widehat{\nabla} f}
\newcommand{\hxicp}{\widehat{\xi}_{\mathrm{cp}}}
\newcommand{\hxikcp}{\widehat{\xi}_{k, \mathrm{cp}}}
\newcommand{\hskcpj}{\widehat{s}_{k, \mathrm{cp}}^{(j)}}
\newcommand{\xicp}{\xi_{\mathrm{cp}}}
\newcommand{\xikcp}{\xi_{k, \mathrm{cp}}}
\newcommand{\hsk}{s_k}
\newcommand{\skcp}{s_{k, \mathrm{cp}}}
\newcommand{\skecp}{s_{k_\epsilon, \mathrm{cp}}}
\newcommand{\hskcp}{\widehat{s}_{k, \mathrm{cp}}}
\newcommand{\hskecp}{\widehat{s}_{k_\epsilon, \mathrm{cp}}}
\newcommand{\varphicp}{\varphi_{\mathrm{cp}}}
\newcommand{\mcp}{m_{\mathrm{cp}}}
\newcommand{\scp}{s_{\mathrm{cp}}}
\newcommand{\hscp}{\widehat{s}_{\mathrm{cp}}}
\newcommand{\dom}{\mathrm{dom}}
\newcommand{\B}{\mathds{B}}
\journalname{Computational Optimization and Applications}
\def\thistitle{An Inexact Modified Quasi-Newton Method for Nonsmooth Regularized Optimization}
\def\authorone{Nathan Allaire}
\def\authortwo{Sébastien Le Digabel}
\def\authorthree{Dominique Orban}
\begin{document}

\title{%
  \thistitle
  \thanks{%
    This research was supported by NSERC Discovery Grants RGPIN-2024-05086, RGPIN-2020-06535, and RGPIN-2025-06911.
  }
}

\titlerunning{Inexact Modified QN for Regularized Optimization}        

\author{\authorone{} \and
  \authortwo{} \and
  \authorthree{}}


\institute{%
  \authorone{} \at
  GERAD and Department of Mathematics and Industrial Engineering, Polytechnique Montr\'eal.
  \email{nathan.allaire@polymtl.ca}           
  \and
  \authortwo{} \at
  GERAD and Department of Mathematics and Industrial Engineering, Polytechnique Montr\'eal.
  \email{sebastien.le-digabel@polymtl.ca}           
  \and
  \authorthree{} \at
  GERAD and Department of Mathematics and Industrial Engineering, Polytechnique Montr\'eal.
  \email{dominique.orban@gerad.ca}           
}

\date{Received: date / Accepted: date}

\pagestyle{myheadings}

\maketitle
\thispagestyle{mytitlepage}

\begin{abstract}
  We introduce method iR2N, a modified proximal quasi-Newton method for minimizing the sum of a \(\mathcal{C}^1\) function \(f\) and a lower semi-continuous prox-bounded \(h\) that permits inexact evaluations of \(f\), \(\nabla f\) and of the relevant proximal operators.
  Both \(f\) and \(h\) may be nonconvex.
  In applications where the proximal operator of \(h\) is not known analytically but can be evaluated via an iterative procedure that can be stopped early, or where the accuracy on \(f\) and \(\nabla f\) can be controlled, iR2N can save significant computational effort and time.
  At each iteration, iR2N computes a step by approximately minimizing the sum of a quadratic model of \(f\), a model of \(h\), and an adaptive quadratic regularization term that drives global convergence.
  In our implementation, the step is computed using a variant of the proximal-gradient method that also allows inexact evaluations of the smooth model, its gradient, and proximal operators.
  We assume that it is possible to interrupt the iterative process used to evaluate proximal operators when the norm of the current iterate is larger than a fraction of that of the minimum-norm optimal step, a weaker condition than others in the literature.
  Under standard assumptions on the accuracy of \(f\) and \(\nabla f\), we establish global convergence in the sense that a first-order stationarity measure converges to zero and a worst-case evaluation complexity in \(O(\epsilon^{-2})\) to bring said measure below \(\epsilon > 0\).
  Thus, inexact evaluations and proximal operators do not deteriorate asymptotic complexity compared to methods that use exact evaluations.
  We illustrate the performance of our implementation on problems with \(\ell_p\)-norm, \(\ell_p\) total-variation and the indicator of the nonconvex pseudo \(p\)-norm ball as regularizers.
  On each example, we show how to construct an effective stopping condition for the iterative method used to evaluate the proximal operator that ensures satisfaction of our inexactness assumption.
  Our results show that iR2N offers great flexibility when exact evaluations are costly or unavailable, and highlight how controlled inexactness can reduce computational effort effectively and significantly.
\end{abstract}

%


\section{Introduction}%

We consider the problem class
\begin{equation}%
  \label{eq:problem-adressed}
  \minimize{x \in \R^n} f(x) + h(x),
\end{equation}
where \(f : \R^n \to \R\) is continuously differentiable, \(h : \R^n \to \R \cup \{+\infty\}\) is proper, lower semi-continuous (lsc), and both may be nonconvex.
In practice, \(h\), called the \emph{regularizer}, is designed to promote desirable properties in solutions, such as sparsity.
We develop method iR2N, a variant of the modified proximal quasi-Newton algorithm R2N of \citet{diouane-habiboullah-orban-2024} that allows for inexact evaluations of \(f\) and \(\nabla f\), as well as of the relevant proximal operators.
Among other applications, evaluations of \(f\) and \(\nabla f\) are inexact when they result from the discretization of a differential or integral operator \citep{baraldi-kouri-2023}, from the sampling of a sum of a large number of terms, as in machine learning applications \citep{robbins-monro-1951}, or from using multiple floating-point systems \citep{monnet-orban-2025}.
Like R2N, iR2N computes a step at each iteration by approximately minimizing the sum of a quadratic model of \(f\), a model of \(h\), and an adaptive quadratic regularization term.
The subproblem is solved with method iR2, which is to method R2 of \citet{aravkin-baraldi-orban-2022} as iR2N is to R2N, i.e., proximal operators are evaluated inexactly.
Method R2 may be viewed as a variant of the standard proximal-gradient method with adaptive step length, and is a special case of R2N.
We consider settings where proximal operators do not have a closed-form expression, and one must thus rely on inexact evaluations.
Specifically, we focus on scenarios where proximal operators can be evaluated by running a convergent algorithm that can be terminated early with appropriate guarantees detailed below.
Special cases that fit our assumptions include choices of convex and nonconvex \(h\), including the \(\ell_p\)-norm total variation (TV), \(\ell_p\)-norm regularizer and the indicator of the nonconvex \(\ell_p\)-pseudo norm ball with \(0 < p < 1\).
Method iR2N reduces to R2N when \(f\), \(\nabla f\) and proximal operators are evaluated exactly.
We establish global convergence of iR2N under standard assumptions on the inexactness of \(f\) and \(\nabla f\), and provided the inexact proximal operator yields a step whose norm is at least a fraction of the norm of an optimal step.
We also establish that worst-case evaluation complexity of iR2N is of the same order as that of R2N.
Thus, inexact evaluations do not degrade worst-case complexity.
Our remaining assumptions are standard.
To emphasize our assumptions on inexact evaluations, we simplify those assumptions of \citep{diouane-habiboullah-orban-2024} that would complicate the analysis.
In particular, we assume that \(\nabla f\) is Lipschitz continuous, but its Lipschitz constant need not be known nor approximated.
However, it should be clear that iR2N remains convergent under the more general assumptions of \citep{diouane-habiboullah-orban-2024} with its worst-case complexity affected accordingly.
It should also be clear that minor alterations of our approach would establish that the proximal quasi-Newton trust-region algorithm of \citet{aravkin-baraldi-orban-2021,aravkin-baraldi-leconte-orban-2021} remains convergent under inexact evaluations and its asymptotic worst-case complexity is unchanged.
Such minor alterations would also establish convergence and complexity of Levenberg-Marquardt variants in the vein of \citep{aravkin-baraldi-orban-2024} that are useful when \(f\) is a least-squares residual.

We report computational experience with the proximal operator of the \(\ell_p\) norm, the total variation in \(\ell_p\) norm, and the indicator of the nonconvex \(\ell_p\)-pseudo norm ball.
Each of those proximal operators must be evaluated via an iterative procedure.
For each, we devise a stopping condition that ensures satisfaction of our assumption on inexact proximal operator evaluations.
Our results show that iR2N offers great flexibility in settings where exact evaluations are costly or unavailable, and highlight how controlled inexactness can be exploited to reduce computational effort effectively and significantly.
We provide an efficient Julia implementation of iR2N as part of the open-source package \texttt{RegularizedOptimization.jl} \citep{baraldi-leconte-orban-2024}.

\subsection*{Related Research}

Most numerical methods for~\eqref{eq:problem-adressed} require the evaluation of one or more proximal operators \citep{moreau-1965} at each iteration.
The proximal operator of \(h\) with step size \(\nu > 0\) at \(q \in \R^n\) is
\begin{equation}%
  \label{eq:def-prox}
  \prox{\nu h}(q) \defeq \argmin{u \in \R^n} \tfrac{1}{2} \|u - q\|^2 + \nu h(u) \subseteq \R^n.
\end{equation}
For given \(h\) and \(q\),~\eqref{eq:def-prox} can be empty, a singleton or contain multiple elements, one of which must be identified.
\citet{beck-2017} and \citet{chierchia-2020} summarize the closed-form of~\eqref{eq:def-prox} for a large number of choices of \(h\) relevant in applications.
The standard proximal-gradient method \citep{fukushima-mine-1981} along with most proximal methods in the literature assume that obtaining an element of~\eqref{eq:def-prox} \emph{exactly} is possible.

For certain choices of \(h\), it is necessary to apply an iterative method to appoximate an element of~\eqref{eq:def-prox}, e.g., the total variation (TV) with \(\ell_p\) regularization \(h(x) = \|Dx\|_p\), where \(p \geq 1\) and \(D\) is the upper bidiagonal finite-difference operator with a diagonal of negative ones and a superdiagonal of ones.
Finding an element in~\eqref{eq:def-prox} for the TV-\(\ell_p\) can be achieved via the taut-string method \citep{barbero-sra-2018} or the fast TV denoising method \citep{condat-2013}.
As in other methods in the literature for various choices of convex \(h\) \citep{barre-taylor-bach-2023,dai-robinson-2022,gu-wang-huo-huang-2018}, the latter monitor the duality gap between a convex problem and its dual.
Those algorithms have guaranteed convergence properties and can be terminated early, i.e., short of optimality.
In the above, the evaluation of~\eqref{eq:def-prox} is inexact in the sense that a convergent process to identify a global minimizer is applied and can be stopped short of optimality according to an optimality criterion.

A somewhat more complicated scenario is the algorithm described by \citet{yang-wang-wang-2022} for the case where \(h\) is the indicator of the ``ball'' in pseudo-norm \(\ell_p\) with \(p \in (0, 1)\).
The evaluation of the proximal operator requires solving a nonconvex problem to global optimality in that case, and their algorithm is not guaranteed to always succeed.
We return to this problem in \Cref{sec:implementation}.

Other concepts of inexactness of the proximal operator appear in the literature.
For convex \(h\), \citet{rockafellar-1976} requires that an approximate solution of~\eqref{eq:def-prox} be a certain distance from the optimal set.
Still for convex \(h\), \citet{barre-taylor-bach-2023} unveil multiple ways to define inexactness by finding a primal-dual point in a certain relaxed subdifferential.
\citet{salzo-villa-2012} define three approximations: they compute \(z\) such that either \emph{(i)} \(\|z - \prox{\nu h}(q)\| \le \epsilon\), \emph{(ii)} \(\nu^{-1} (q-z)\) lies in a relaxation of the subdifferential of \(h\) at \(z\), or \emph{(iii)} \(z \in \prox{\nu h}(q + e)\) with \(\|e\| \leq \epsilon\) for some \(\epsilon > 0\).
\citet{chen-yu-huang-2025} extend proximal inexactness by introducing the concept of \((\gamma,\delta,\varepsilon)\)-proximal-gradient stationary point (PGSP) for convex \(h\) based on the Goldstein subdifferential.
The PGSP generalizes the three concepts of \citep{salzo-villa-2012} by jointly relaxing spatial and functional exactness and directly quantifying the first-order residual, thus also encompassing Rockafellar’s \citep{rockafellar-1976} and relaxed subgradient formulations within a unified framework.
For nonconvex \(h\), \citet{gu-wang-huo-huang-2018} say that an element is an inexact solution of~\eqref{eq:def-prox} if its objective value is within \(\epsilon\) of its optimal value.

To cope with inexact evaluations of the proximal operator, classical schemes must be revised to preserve convergence guarantees.
The seminal inexact proximal-point algorithm (iPPA) of \citet{rockafellar-1976} allows summably controlled errors in the resolvant computation of a maximal monotone operator and still ensures global convergence with linear/superlinear behavior under suitable parameter growth.
Building on the accelerated estimate-sequence framework, \citet{salzo-villa-2012} establish that the accelerated iPPA retains \(O(1/k)\) decay under inexactness of type (i) above, and optimal \(O(1/k^2)\) decay under inexactness of type (ii).
\citet{schmidt-roux-bach-2011} establish an \(O(1/k)\) rate for proximal-gradient and an \(O(1/k^2)\) rate for an accelerated variant under inexactness similar to (iii) above.
Extensions include inertial, variable-metric forward–backward schemes with relative inner accuracy and uniform symmetric positive definite metrics \citep{bonettini-rebegoldi-ruggiero-2018}; nonconvex inexact (accelerate) proximal gradient with guarantees matching the exact counterparts under calibrated error schedules \citep{gu-wang-huo-huang-2018}; adaptive, implementable stopping rules that preserve \(O(\epsilon^{-2})\) iteration complexity and enable support identification \citep{dai-robinson-2022}; and accelerated proximal gradient under relative error criteria that maintain an \(O(1/k^2)\) rate \citep{bellocruz-goncalves-krislock-2020}.
For nonconvex problems, the sequence generated by an inexact proximal-gradient (or splitting) method can still be shown to converge to a first-order critical point under an assumption of type (iii) above on the approximation errors \citep{sra-2012}.
Finally, for weakly convex functions, recent results establish global convergence for inexact proximal algorithms under inexactness of type (i) above, allowing controlled inexactness in the proximal steps while maintaining convergence \citep{khahn-mordukhovich-phat-tran-2025}.

\subsection*{Notation}

The Euclidean norm is \(\|\cdot\|\).
When required, other norms are denoted with different symbols.
We use \(f\), \(h\), \(m\), \(\phi\), \(\varphi\), \(\xi\) and \(\psi\) for functions.
Other lowercase Latin letters denote vectors in \(\R^n\).
Exceptions are \(p\) and \(q\), which are standard to denote a pair of dual \(\ell_p\) and \(\ell_q\) norms, and \(r\), which denotes a radius.
Uppercase \(A\) and \(B\) are matrices, \(L\) is a Lipschitz constant, and \(O\) is used for the Landau notation.
Lowercase Greek letters denote scalars.
Calligraphic letters denote sets.

\section{Background}

\subsection{Variational Analysis Concepts}

We say that \(h: \R^n \to \R \cup \{+\infty\}\) is proper if \(h(x) < +\infty\) for at least one \(x \in \R^n\) and lower semi-continuous (lsc) at \(\bar{x} \in \R^n\) if \(\liminf_{x \to \bar{x}} h(x) = h(\bar{x})\).
It is lsc if it is lsc at all \(\bar{x} \in \R^n\).
We say that \(h\) is prox-bounded at \(x\) if there is \(\lambda > 0\) such that \(w \mapsto h(w) + \tfrac{1}{2} \lambda^{-1} \|w - x\|^2\) is bounded below \citep[Definition 1.23]{rockafellar-wets-2009}.
The threshold of prox-boundedness of \(h\) at \(x\) is the supremum of all such \(\lambda\) at \(x\), and is denoted \(\lambda_x\).
We say that \(h\) is \emph{uniformly prox-bounded} if there is \(\lambda \in \R_+ \cup \{+\infty\}\) such that \(\lambda_x \geq \lambda\) for all \(x \in \R^n\).

%
For \(\phi : \R^n \to \R \cup \{+ \infty\}\) and \(\bar{x} \in \dom(\phi)\), the Fréchet subdifferential of \(\phi\) at \(\bar{x}\) is 
\[
  \widehat{\partial} \phi(\bar{x}) \defeq
  \left\{
  v \in \R^n
  \vphantom{\liminf_{x \to \bar{x}} \frac{\phi(x) - \phi(\bar{x}) - v^T (x - \bar{x})}{\|x - \bar{x}\|}}
  \right.
  \,
  \left|
  \,
  \liminf_{x \to \bar{x}} \frac{\phi(x) - \phi(\bar{x}) - v^T (x - \bar{x})}{\|x - \bar{x}\|} \geq 0
  \right\}.
\]
The limiting subdifferential \(\partial \phi(\bar{x})\) of \(\phi\) at \(\bar{x}\) is the set of elements \(v \in \R^n\) such that there exists a sequence \(\{x_k\} \to \bar{x}\) with \(\{\phi(x_k)\} \to \phi(\bar{x})\), and there exists \(v_k \in \widehat{\partial} \phi(x_k)\) for all \(k\) such that \(\{v_k\} \to v\).
It always holds that \(\widehat{\partial} \phi(\bar{x}) \subseteq \partial \phi(\bar{x})\).

If \(\phi\) is proper, we say that \(\bar{x}\) is stationary for \(\phi\), or for the problem of minimizing \(\phi\), if \(0 \in \widehat{\partial} \phi(\bar{x})\).
If \(\phi\) is proper and has a local minimum at \(\bar{x}\), then \(\bar{x}\) is stationary for \(\phi\).
In the special case where \(\phi = f + h\) with \(f\) continuously differentiable and \(h\) proper, then \(\partial \phi(x) = \nabla f(x) + \partial h(x)\)~\citep[Theorem 10.1]{rockafellar-wets-2009}.
We say that \(f: \R^n \to \R\) has Lipschitz-continuous gradient with Lipschitz constant \(L \geq 0\) if for all \(x\) and \(s \in \R^n\),
\begin{equation}%
  \label{eq:lipschitz-gradient}
  |f(x + s) - f(x) - \nabla f(x)^T s| \leq \tfrac{1}{2} L \|s\|^2.
\end{equation}

\subsection{Models}

In this work, we focus on three sources of inexactness: the objective, its gradient and the proximal operator evaluations.
We denote \(\hf\) and \(\hnf\) the inexact counterparts of \(f\) and \(\nf\).
At each iteration, R2N computes a step \(\scp\) defined below that serves to define a stationarity measure and that results from a proximal operator evaluation.
Accordingly, in iR2N, we denote its inexact counterpart \(\hscp\).
We follow \citep{aravkin-baraldi-orban-2022,aravkin-baraldi-orban-2024,diouane-habiboullah-orban-2024} and structure the iterations of an algorithm around two sets of models, but, since the only information we have access to is inexact, those are based on \(\hf\) and \(\hnf\).

For \(\nu > 0\) and \(x \in \R^n\), the first-order models
\begin{align}%
  \label{eq:phi-first-order}
  \varphicp(s; x)      & \defeq \hf(x) + \hnf(x)^T s                                         \\
  %
  \psi(s; x)           & \phantom{:}\approx h(x + s)                                         \\
  \label{eq:model-first-order}
  \mcp(s; x, \nu^{-1}) & \defeq \varphicp(s; x) + \tfrac{1}{2} \nu^{-1} \|s\|^2 + \psi(s; x)
\end{align}
serve to generalize the concept of Cauchy point, hence the subscript ``cp'', where we use the symbol ``\(\approx\)'' to mean that the left-hand side is an approximation of the right-hand side.
We will be more specific in \Cref{ass:psi-accuracy} below.
The dual role of models~\eqref{eq:phi-first-order}--\eqref{eq:model-first-order} is to define a threshold for sufficient decrease at each iteration, and to define a measure of approximate stationarity.

For \(\sigma > 0\), \(x \in \R^n\) and \(B(x) = B(x)^T \in \R^{n \times n}\), the second-order models
\begin{align}%
  \label{eq:phi-second-order}
  \varphi(s; x)   & \defeq \hf(x) + \hnf(x)^T s + \tfrac{1}{2} s^T B(x) s            \\
  \label{eq:model-second-order}
  m(s; x, \sigma) & \defeq \varphi(s; x) + \tfrac{1}{2} \sigma \|s\|^2 + \psi(s; x),
\end{align}
are used to compute a step.
Because \(\varphicp(\cdot; x)\) is linear and \(\varphi(\cdot; x)\) is quadratic for fixed \(x\), both have globally Lipschitz-continuous gradient.

We follow \citep{aravkin-baraldi-orban-2022,aravkin-baraldi-orban-2024,diouane-habiboullah-orban-2024} and require that all models that we consider satisfy the following assumption.

\begin{assumption}
  \label{ass:psi-prox-bounded}
  For all \(x \in \R^n\), \(\psi(\cdot; x)\) is proper, lsc and uniformly prox-bounded.
  In addition, \(\psi(0;x) = h(x)\) and \(\partial \psi (0;x) \subseteq \partial h(x)\).
\end{assumption}

\subsection{The Proximal-Gradient Method}%

The direct generalization of the gradient method to nonsmooth regularized optimization is the proximal-gradient method \citep{fukushima-mine-1981}.
For~\eqref{eq:problem-adressed}, the inexact proximal-gradient iteration can be written
\begin{align}
  x_{k+1} & =   x_k + \skcp
  \\
  \skcp   & \in \argmin{s} \tfrac{1}{2} \nu_k^{-1} \|s + \nu_k \hnf(x_k)\|^2 + \psi(s; x_k) \notag
  \\
          & =   \argmin{s} \hnf(x_k)^T s + \tfrac{1}{2} \nu_k^{-1} \|s\|^2 + \psi(s; x_k)
  \label{eq:proximal-problem}
  \\
          & =   \argmin{s} \mcp(s; x_k, \nu_k^{-1}), \notag
\end{align}
where \(\nu_k > 0\) is an appropriate step length, though it is typically used with \(\psi(s; x_k) \defeq h(x_k + s)\).
We call \(\skcp\) a Cauchy step.
It turns out that \(\skcp\) exists provided \(\nu_k\) is sufficiently small.

\begin{proposition}[{\protect \citealp[Theorem~\(1.25\)]{rockafellar-wets-2009}}]%
  \label{prop:skcp}
  Let \(\varphicp(\cdot; x)\) be as in~\eqref{eq:phi-first-order}, and \(\psi(\cdot; x)\) be proper, lsc, prox-bounded with threshold \(\lambda_x > 0\) and such that \(\psi(0; x) = h(x)\).
  For any \(0 < \nu < \lambda_x\), the set \(\argmin{s} \mcp(s; x, \nu^{-1})\) is nonempty and compact.
\end{proposition}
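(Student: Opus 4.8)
The plan is to recognize $\mcp(\cdot; x, \nu^{-1})$ from \eqref{eq:model-first-order} as the objective of a proximal subproblem for $\psi(\cdot; x)$ and then invoke the cited Theorem~1.25 of \citep{rockafellar-wets-2009}. First I would complete the square in the affine-plus-quadratic part of the objective, using
\[
  \hnf(x)^T s + \tfrac{1}{2} \nu^{-1} \|s\|^2 = \tfrac{1}{2} \nu^{-1} \|s + \nu \hnf(x)\|^2 - \tfrac{1}{2} \nu \|\hnf(x)\|^2 .
\]
Setting $q \defeq -\nu \hnf(x)$, dropping the additive constants $\hf(x) - \tfrac{1}{2}\nu\|\hnf(x)\|^2$, and multiplying through by $\nu > 0$—none of which affects the minimizers—the set $\argmin{s} \mcp(s; x, \nu^{-1})$ coincides with $\prox{\nu \psi(\cdot; x)}(q)$ in the sense of \eqref{eq:def-prox}. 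Since $\psi(\cdot; x)$ is proper, lsc and prox-bounded with threshold $\lambda_x$ and $0 < \nu < \lambda_x$, Theorem~1.25 of \citep{rockafellar-wets-2009} delivers both conclusions directly.

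If instead I wanted a self-contained argument, I would proceed by the direct method of the calculus of variations. Write $g(s) \defeq \mcp(s; x, \nu^{-1})$; it is lsc, because $\psi(\cdot; x)$ is lsc and the remaining terms are continuous, and proper, because $\psi(\cdot; x)$ is proper. The crux is coercivity. Pick any $\mu$ with $\nu < \mu < \lambda_x$. By prox-boundedness of $\psi(\cdot; x)$ with threshold $\lambda_x$, the map $s \mapsto \psi(s; x) + \tfrac{1}{2}\mu^{-1}\|s\|^2$ is bounded below, say by $\beta \in \R$ (the threshold being independent of the reference point). Splitting the quadratic term gives
\[
  g(s) \geq \hf(x) + \hnf(x)^T s + \tfrac{1}{2}\bigl(\nu^{-1} - \mu^{-1}\bigr)\|s\|^2 + \beta ,
\]
and because $\nu < \mu$ the coefficient $\nu^{-1} - \mu^{-1}$ is strictly positive, so the right-hand side is a coercive quadratic in $s$. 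Hence $g$ is bounded below and its sublevel sets $\{s : g(s) \leq \alpha\}$ are bounded; being sublevel sets of an lsc function, they are closed, hence compact.

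It then remains to conclude. By the Weierstrass theorem, a proper lsc function with nonempty compact sublevel sets attains its infimum $g^\star \defeq \inf_s g(s) > -\infty$, so $\argmin{s} g(s)$ is nonempty. Finally, $\argmin{s} g(s) = \{s : g(s) \leq g^\star\}$ is itself a sublevel set of $g$, hence closed and bounded, hence compact. The main obstacle is the coercivity step: the only quantitative input is prox-boundedness, and the key idea is to reserve a strictly positive share $\tfrac{1}{2}(\nu^{-1}-\mu^{-1})\|s\|^2$ of the regularization by choosing $\mu$ strictly between $\nu$ and $\lambda_x$, which is precisely where the hypothesis $\nu < \lambda_x$ enters. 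Everything else is routine.
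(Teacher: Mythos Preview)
Your proposal is correct. The paper gives no proof beyond the citation to \citep[Theorem~1.25]{rockafellar-wets-2009}, and your first paragraph---identifying $\argmin_s \mcp(s;x,\nu^{-1})$ with $\prox{\nu\psi(\cdot;x)}(-\nu\hnf(x))$ and invoking that theorem---is exactly that approach; your self-contained coercivity argument is a sound elaboration the paper omits.
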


We denote \(\scp\) an element of \(\argmin{s} \mcp(s; x, \nu^{-1})\) when one exists.
When \(\scp\) is well defined, the quantity
\begin{equation}%
  \label{eq:xicp}
  \begin{aligned}
    \xicp(\scp, x, \nu^{-1}) & \defeq (\varphicp + \psi)(0; x) - (\varphicp + \psi)(\scp; x)
    \\
                             & \phantom{:}= (\hf + h)(x) - (\varphicp + \psi)(\scp; x)
  \end{aligned}
\end{equation}
is central to the algorithm and the analysis, as it is in \citep{aravkin-baraldi-orban-2022,aravkin-baraldi-orban-2024,diouane-habiboullah-orban-2024}, where it plays the dual role of defining Cauchy decrease and serving as stationarity measure.
Indeed, under standard assumptions, \(x\) is stationary for~\eqref{eq:problem-adressed} if \(\xicp(\scp; x, \nu^{-1}) = 0\) \citep[Lemma~\(3.5\)]{diouane-habiboullah-orban-2024}.
We diverge slightly from those references and, for reasons that become clear later, note that \(\nu^{-1} \|\scp\|\) can equally be used as stationarity measure.

\begin{proposition}%
  \label{prop:stationarity}
  Let \(x \in \R^n\) and \(\psi(\cdot; x)\)  be proper, lsc, prox-bounded with threshold \(\lambda_x > 0\) and such that \(\partial \psi(0; x) \subseteq \partial h(x)\).
  Let \(0 < \nu < \lambda_x\) and \(\scp \in \argmin{s} \mcp(s; x, \nu^{-1})\).
  If \(\scp = 0\), then \(0 \in \hnf(x) + \partial h(x)\).
  If, in addition, \(\hnf(x) = \nabla f(x)\), then \(x\) is stationary for~\eqref{eq:problem-adressed}.
\end{proposition}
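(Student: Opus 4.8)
The plan is to apply Fermat's rule at the minimizer $\scp = 0$, strip off the smooth part of the model $\mcp(\cdot; x, \nu^{-1})$ with the subdifferential sum rule, and then transfer the resulting inclusion from $\psi$ to $h$ through the hypothesis $\partial \psi(0; x) \subseteq \partial h(x)$.

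First I would write $\mcp(s; x, \nu^{-1}) = g(s) + \psi(s; x)$, where $g(s) \defeq \hf(x) + \hnf(x)^T s + \tfrac{1}{2} \nu^{-1} \|s\|^2$ is continuously differentiable in $s$ with $\nabla g(s) = \hnf(x) + \nu^{-1} s$, so that $\nabla g(0) = \hnf(x)$. Since $\scp = 0 \in \argmin{s} \mcp(s; x, \nu^{-1})$, the point $0$ is a global, hence local, minimizer of $\mcp(\cdot; x, \nu^{-1})$, and Fermat's rule yields $0 \in \widehat{\partial}_s \mcp(0; x, \nu^{-1}) \subseteq \partial_s \mcp(0; x, \nu^{-1})$, where the inclusion uses $\widehat{\partial} \phi \subseteq \partial \phi$.

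Next I would apply the sum rule. Because $g$ is $\mathcal{C}^1$, \citep[Theorem~10.1]{rockafellar-wets-2009} splits the limiting subdifferential exactly, with no constraint qualification needed, giving $\partial_s \mcp(0; x, \nu^{-1}) = \nabla g(0) + \partial \psi(0; x) = \hnf(x) + \partial \psi(0; x)$. Combined with the previous inclusion this reads $0 \in \hnf(x) + \partial \psi(0; x)$, and the hypothesis $\partial \psi(0; x) \subseteq \partial h(x)$ then gives $0 \in \hnf(x) + \partial h(x)$, the first assertion. For the second, substituting $\hnf(x) = \nabla f(x)$ yields $0 \in \nabla f(x) + \partial h(x)$; a final application of the sum rule to $f + h$, valid since $f$ is continuously differentiable, identifies this set with $\partial (f + h)(x)$, so $0 \in \partial (f + h)(x)$ and $x$ is stationary for \eqref{eq:problem-adressed}.

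The one point deserving care is to keep the subdifferential calculus consistent: Fermat's rule produces a Fréchet inclusion, whereas the hypothesis and the sum rule are phrased with the limiting subdifferential, so I would pass from $\widehat{\partial}$ to $\partial$ via $\widehat{\partial} \phi \subseteq \partial \phi$ at the outset and then work entirely with $\partial$. I expect this bookkeeping, rather than any nontrivial estimate, to be the only subtlety, since the sum rule is exact here and all the containments run in the favorable direction.
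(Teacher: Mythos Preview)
Your argument is correct and is essentially what the paper's cited lemma carries out: Fermat's rule at the minimizer \(\scp = 0\), the smooth-plus-nonsmooth sum rule to split off \(\hnf(x)\), and the hypothesis \(\partial\psi(0;x) \subseteq \partial h(x)\). The paper's own proof simply observes that \(\scp = 0\) forces \(\xicp = 0\) and then defers to \citep[Lemma~3.5]{diouane-habiboullah-orban-2024}, so your proposal is in effect a self-contained version of the same proof.
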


\begin{proof}
  If \(\scp = 0\), then \(\xicp(\scp; x, \nu^{-1}) = 0\) by~\eqref{eq:xicp}.
  The rest of the proof is identical to those of \citep[Lemma~\(3.5\)]{diouane-habiboullah-orban-2024} and \citep[Lemma~\(3.3\)]{aravkin-baraldi-orban-2024}.
  \qed
\end{proof}

In the special case \(h = 0\), i.e., smooth optimization, \(\scp = -\nu \nabla f(x)\).
Thus, we normalize and use \(\nu^{-1} \|\scp\|\) as stationarity measure.

The identification of an \(\scp\), when one exists, coincides with the identification of an element in the image of a proximal operator~\eqref{eq:def-prox}, i.e., \(\scp \in \prox{\nu \psi(\cdot; x)}(-\nu \hnf(x))\).
It is the computation of an element in such a set that represents the main computational challenge in problems for which the set is not known analytically, so that one must resort to an iterative numerical method.
In that case, the \(\scp\) computed is inexact, and we refer to this situation as an inexact evaluation of the proximal operator.

The following result is hidden inside the proof of \citep[Lemma~\(2\)]{bolte-sabach-teboulle-2014}.

\begin{proposition}%
  \label{prop:sufficient-decrease}
  Let \(f\) have Lipschitz-continuous gradient with Lipschitz constant \(L \geq 0\) and let \(h\) be proper, lsc and prox-bounded at \(x \in \R^n\) with threshold \(\lambda_x > 0\).
  Let \(0 < \nu < \min(1/L, \, \lambda_x)\), and let \(s \in \R^n\) be such that
  \begin{equation}%
    \label{eq:suff-decrease}
    f(x) + \nabla f(x)^T s + \tfrac{1}{2} \nu^{-1} \|s\|^2 + h(x + s) \leq (f + h)(x).
  \end{equation}
  Then,
  \begin{equation}%
    \label{eq:xi-inequality}%
    (f + h)(x)
    - (f + h)(x + s) \geq \tfrac{1}{2} (\nu^{-1} - L) \|s\|^2.
  \end{equation}
\end{proposition}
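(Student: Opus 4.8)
The plan is to combine the one-sided descent estimate furnished by the Lipschitz continuity of \(\nabla f\) with the sufficient-decrease hypothesis~\eqref{eq:suff-decrease}. The mechanism is transparent: the hypothesis supplies a quadratic gain of size \(\tfrac12 \nu^{-1}\|s\|^2\), while passing from the linear model of \(f\) to the true value \(f(x+s)\) costs at most the quadratic error \(\tfrac12 L\|s\|^2\); what survives is exactly \(\tfrac12(\nu^{-1}-L)\|s\|^2\). No variational machinery is needed, so I would derive~\eqref{eq:xi-inequality} from two elementary inequalities and a sign rearrangement.

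First I would invoke~\eqref{eq:lipschitz-gradient} in its upper form to control the objective after the step,
\begin{equation*}
  f(x + s) \leq f(x) + \nabla f(x)^T s + \tfrac{1}{2} L \|s\|^2 ,
\end{equation*}
and then add \(h(x) - h(x+s)\) to both sides. After cancelling the stray \(f(x)\) this rearranges to the lower bound
\begin{equation*}
  (f + h)(x) - (f + h)(x + s) \geq h(x) - \nabla f(x)^T s - h(x + s) - \tfrac{1}{2} L \|s\|^2 .
\end{equation*}

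Next I would turn the hypothesis~\eqref{eq:suff-decrease} into a bound on the \(h\)-terms on the right. Writing \((f+h)(x) = f(x)+h(x)\), cancelling the common \(f(x)\), and rearranging gives
\begin{equation*}
  h(x) - \nabla f(x)^T s - h(x + s) \geq \tfrac{1}{2} \nu^{-1} \|s\|^2 .
\end{equation*}
Substituting this into the previous display and merging the two quadratic terms produces
\begin{equation*}
  (f + h)(x) - (f + h)(x + s) \geq \tfrac{1}{2} \nu^{-1} \|s\|^2 - \tfrac{1}{2} L \|s\|^2 = \tfrac{1}{2}(\nu^{-1} - L)\|s\|^2 ,
\end{equation*}
which is precisely~\eqref{eq:xi-inequality}.

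There is no genuine obstacle in this argument; the only point demanding care is the bookkeeping of signs across the two rearrangements. It is worth noting that the derivation uses \emph{only}~\eqref{eq:lipschitz-gradient} and~\eqref{eq:suff-decrease}: the conclusion holds for every \(s\) satisfying~\eqref{eq:suff-decrease}, and the restriction \(\nu < 1/L\) is not needed to establish the inequality but merely guarantees that its right-hand side is strictly positive, so that~\eqref{eq:xi-inequality} certifies a strict decrease. The prox-boundedness of \(h\) and the constraint \(\nu < \lambda_x\) play no role here and are inherited from the surrounding framework.
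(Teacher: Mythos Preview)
Your proof is correct and follows essentially the same approach as the paper: both combine the Lipschitz descent estimate~\eqref{eq:lipschitz-gradient} with the hypothesis~\eqref{eq:suff-decrease} and rearrange. The paper does this in one stroke by injecting \(f(x) + \nabla f(x)^T s \geq f(x+s) - \tfrac{1}{2}L\|s\|^2\) directly into the left-hand side of~\eqref{eq:suff-decrease}, whereas you split the manipulation into two intermediate displays, but the content is identical.
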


\begin{proof}
  We inject
  \(
  f(x) + \nabla f(x)^T s \geq f(x + s) - \tfrac{1}{2} L \|s\|^2,
  \)
  which follows from~\eqref{eq:lipschitz-gradient}, into~\eqref{eq:suff-decrease} and obtain~\eqref{eq:xi-inequality}.
  \qed
\end{proof}

\Cref{prop:sufficient-decrease} applied to \(\varphicp(\cdot; x)\), \(\psi(\cdot; x)\) and \(\scp \in \prox{\nu \psi(\cdot; x)} (-\nu \hnf(x))\), yields
\begin{equation}%
  \label{eq:xikcp-inequality}
  \xicp(\scp; x, \nu^{-1}) \geq \tfrac{1}{2} \nu^{-1} \|\scp\|^2,
\end{equation}
because the Lipschitz constant of \(\nabla \varphicp(\cdot; x)\) is zero.

By contrast, we denote an approximate Cauchy step resulting from an \emph{inexact} minimization of~\eqref{eq:model-first-order} as \(\hscp\).
We will be more specific about the meaning of inexactness in that context in \Cref{ass:inexact-decrease}.
Accordingly, we define
\begin{equation*}%
  \hxicp(\hscp; x, \nu^{-1}) \defeq (\varphicp + \psi)(0; x) - (\varphicp + \psi)(\hscp; x).
\end{equation*}
\Cref{prop:sufficient-decrease} states that~\eqref{eq:xi-inequality} also holds for any \(s\) that produces simple decrease in~\eqref{eq:model-first-order}; \(s\) need not be an exact minimizer.
Thus, if we apply a descent procedure to minimize~\eqref{eq:model-first-order} starting from \(s = 0\), any iterate, denoted generically as \(\hscp\), generated by that procedure will satisfy~\eqref{eq:xi-inequality}, i.e.,
\begin{equation}%
  \label{eq:hxikcp-inequality}%
  (\varphicp + \psi)(0; x)
  - (\varphicp + \psi)(\hscp; x) \geq \tfrac{1}{2} \nu^{-1} \|\hscp\|^2.
\end{equation}

Thus, an exact minimizer in~\eqref{eq:proximal-problem} would produce a Cauchy step \(\skcp\) that satisfies~\eqref{eq:xikcp-inequality}.
For brevity, we write \(\xikcp \defeq \xicp(\skcp; x_k, \nu_k^{-1})\) and \(\hxikcp\) instead of \(\hxicp(\hskcp; x_k, \nu_k^{-1})\).
The above shows that \(\xikcp \geq \tfrac{1}{2} \nu_k^{-1} \|\skcp\|^2\) and \(\hxikcp \geq \tfrac{1}{2} \nu_k^{-1} \|\hskcp\|^2\) provided \(\hskcp\) results in simple decrease in~\eqref{eq:model-first-order} from \(s = 0\).


\Cref{prop:stationarity} indicates that one role of the first-order models~\eqref{eq:phi-first-order}--\eqref{eq:model-first-order}, and hence of \(\hskcp\) and \(\hxikcp\) is to determine approximate stationarity.
The role of the second-order models~\eqref{eq:phi-second-order}--\eqref{eq:model-second-order} is to allow us to compute a step that improves upon the (inexact) Cauchy step.
Minimizing the second-order model is a well-defined problem for all sufficiently large \(\sigma_k\).

\begin{proposition}[{\protect \citealp[Proposition~\(3.3\)]{diouane-habiboullah-orban-2024}}]%
  Let \(\varphi(\cdot; x)\) be defined as in~\eqref{eq:phi-second-order}, and let \(\psi(\cdot; x)\) be proper, lsc and prox-bounded with threshold \(\lambda_x > 0\) and such that \(\psi(0; x) = h(x)\).
  For any \(\sigma > \lambda_x^{-1} - \lambda_{\min}(B(x))\), the set \(\argmin{s} m(s; x, \sigma)\) is nonempty and compact, where \(\lambda_{\min}\) represents the smallest eigenvalue.
\end{proposition}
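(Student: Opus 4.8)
The plan is to exhibit \(m(\cdot; x, \sigma)\) as a proper, lsc, level-bounded function, and then invoke the standard existence result that such a function attains its infimum on a nonempty compact set \citep[Theorem~1.9]{rockafellar-wets-2009}. Two of the three requirements are immediate: \(\varphi(\cdot; x)\) and \(\tfrac{1}{2}\sigma \|\cdot\|^2\) are continuous while \(\psi(\cdot; x)\) is lsc by hypothesis, so \(m(\cdot; x, \sigma)\) is lsc; and it is finite at any \(s\) where \(\psi(\cdot; x)\) is finite, which exists because \(\psi(\cdot; x)\) is proper. The entire difficulty is therefore concentrated in level-boundedness.

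First I would group the curvature and regularization terms by setting \(M \defeq B(x) + \sigma I\), a symmetric matrix whose eigenvalues are those of \(B(x)\) shifted by \(\sigma\), so that \(\lambda_{\min}(M) = \lambda_{\min}(B(x)) + \sigma\). The hypothesis \(\sigma > \lambda_x^{-1} - \lambda_{\min}(B(x))\) is then exactly the statement \(\lambda_{\min}(M) > \lambda_x^{-1} > 0\); in particular \(M \succ 0\). Writing \(\mu \defeq \lambda_{\min}(M)\) and using \(s^T M s \geq \mu \|s\|^2\), I obtain the pointwise lower bound \(m(s; x, \sigma) \geq \hf(x) + \hnf(x)^T s + \tfrac{1}{2} \mu \|s\|^2 + \psi(s; x) = \mcp(s; x, \mu)\), i.e.\ the first-order model~\eqref{eq:model-first-order} with step length \(\nu = 1/\mu \in (0, \lambda_x)\).

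The main and only delicate step is to convert this lower bound into level-boundedness. I would pick a real \(t\) with \(\lambda_x^{-1} < t < \mu\), which is possible precisely because \(\mu > \lambda_x^{-1}\). Since \(1/t < \lambda_x\), prox-boundedness of \(\psi(\cdot; x)\) with threshold \(\lambda_x\) guarantees that \(s \mapsto \psi(s; x) + \tfrac{1}{2} t \|s\|^2\) is bounded below by some constant \(\gamma\). Splitting \(\tfrac{1}{2}\mu\|s\|^2 = \tfrac{1}{2}(\mu - t)\|s\|^2 + \tfrac{1}{2}t\|s\|^2\) in the lower bound yields \(m(s; x, \sigma) \geq \hf(x) + \hnf(x)^T s + \tfrac{1}{2}(\mu - t)\|s\|^2 + \gamma\), whose right-hand side is a strictly convex coercive quadratic in \(s\) because \(\mu - t > 0\). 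Hence \(m(s; x, \sigma) \to +\infty\) as \(\|s\| \to \infty\), so \(m(\cdot; x, \sigma)\) is both bounded below (which, with the finiteness already noted, confirms properness) and level-bounded.

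With properness, lower semicontinuity, and level-boundedness in hand, the existence result gives that \(\argmin{s} m(s; x, \sigma)\) is nonempty and compact, as claimed. I expect the only real obstacle to be the indefiniteness of \(B(x)\): the term \(\tfrac{1}{2}s^T B(x) s\) may be unbounded below on its own, and it is solely the eigenvalue condition \(\lambda_{\min}(B(x)) + \sigma > \lambda_x^{-1}\) that lets the regularization dominate while still leaving a strictly positive margin \(\mu - t\) to beat the prox-boundedness threshold. One could alternatively observe that \(\mcp(\cdot; x, \mu)\) has nonempty compact \(\argmin\) by \Cref{prop:skcp}; but since that proposition as stated delivers existence of minimizers rather than level-boundedness, I prefer the self-contained coercivity estimate above.
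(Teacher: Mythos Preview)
Your argument is correct. The reduction to a coercive quadratic lower bound via the eigenvalue shift \(M = B(x) + \sigma I\) and the margin \(t \in (\lambda_x^{-1}, \mu)\) is exactly the right way to exploit the prox-boundedness threshold, and the invocation of \citep[Theorem~1.9]{rockafellar-wets-2009} closes the argument cleanly.

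There is nothing to compare against in this paper: the proposition is stated with a citation to \citep[Proposition~3.3]{diouane-habiboullah-orban-2024} and no proof is given here. Your self-contained coercivity estimate is the standard route and is essentially what underlies the cited result; the alternative you mention---bounding \(m\) below by \(\mcp(\cdot; x, \mu)\) and then appealing to \Cref{prop:skcp} (i.e., \citep[Theorem~1.25]{rockafellar-wets-2009}) with \(\nu = 1/\mu < \lambda_x\)---is equally valid and arguably more in the spirit of the paper, since it reuses an already-stated proposition rather than re-deriving level-boundedness from scratch. Either way, the key observation is the same: the condition \(\sigma > \lambda_x^{-1} - \lambda_{\min}(B(x))\) is precisely what makes the combined quadratic \(\tfrac{1}{2}s^T(B(x) + \sigma I)s\) strong enough to dominate the prox-boundedness threshold of \(\psi(\cdot; x)\).
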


\section{Algorithm and Convergence Analysis}%
\label{sec:analysis}

Our algorithm is a modification of method R2N of \citet{diouane-habiboullah-orban-2024}.
At a general iteration \(k\), an approximate Cauchy step \(\hskcp\) is computed together with the corresponding value of \(\hxikcp\) by minimizing~\eqref{eq:model-first-order} inexactly.
If \(x_k\) is not approximately stationary, a step \(\hsk\) is computed by approximately minimizing~\eqref{eq:model-second-order} in a sense made precise in \Cref{ass:inexact-decrease}.
Because only \(\hf\), and not \(f\), is available, we compute the ratio of achieved versus predicted decrease
\begin{equation}%
  \label{eq:def-rhok}
  \widehat{\rho}_k \defeq \frac{\hf(x_k) + h(x_k) - (\hf(x_k + \hsk) + h(x_k + \hsk))}{\varphi(0; x_k) + \psi(0; x_k) - (\varphi(\hsk; x_k) + \psi(\hsk; x_k))}
\end{equation}
to accept or reject \(\hsk\).
Acceptance of \(\hsk\) occurs when \(\widehat{\rho}_k \geq \widehat{\eta}_1 > 0\), which indicates that sufficient decrease occurs in \(\hf + h\).
The parameters of the algorithm, specifically \(\sigma_{\min}\), together with assumptions on the accuracy of \(\hf\), are chosen so that acceptance of \(\hsk\) also implies that sufficient decrease occurs in \(f + h\).
We then update \(\sigma_k\) accordingly, as in R2N.
All that is required of \(\hsk\) is that it satisfy a sufficient decrease condition---see \Cref{ass:cauchy-decrease} below.
That can be achieved, for instance, by computing \(\hskcp\) from a single (inexact) proximal-gradient iteration on~\eqref{eq:model-second-order} with a well-chosen step length \(\nu_k\) starting from \(s = 0\), and computing \(\hsk\) by continuing the (inexact) proximal-gradient iterations from \(\hskcp\).
Should \(\|\hsk\|\) be much larger than \(\|\hskcp\|\), we reset \(\hsk\) to \(\hskcp\) as in R2N.
The procedure is formally stated as \Cref{alg:ir2n}.
We refer the reader to \citep{diouane-habiboullah-orban-2024} for more background.

\begin{algorithm}[ht]%
  \caption{iR2N%
    \label{alg:ir2n}
  }
  \begin{algorithmic}[1]%
    \State Given \(\kf > 0\), \(\knf > 0\), choose constants \(0 < \gamma_3 \leq 1 < \gamma_1 \leq \gamma_2\), \(0 < \widehat{\eta}_1 \leq \widehat{\eta}_2 < 1\).
    \State Choose \(0 < \theta_1 < 1 < \theta_2\).
    \State Choose \(\sigma_{\min} > 4 \kappa_f / \widehat{\eta}_1\) and \(\sigma_0 \geq \sigma_{\min}\).
    \For{\(k = 0, 1, \ldots\)}
    \State Choose \(B_k \defeq B(x_k) \in \R^{n \times n}\) such that \(B_k = B_k^T\).
    \State%
    Set \(\nu_k \defeq \theta_1 / (\| B_k \| + \sigma_k)\).
    \Repeat
    \State Choose approximations \(\hf(x_k)\) and \(\hnf(x_k)\).
    \State%
    \label{stp:skcp}%
    Compute \(\hskcp\) an approximate solution of \(\min_s m_{\text{cp}}(s; x_k, \nu_k^{-1})\) and \(\hxikcp\).
    \State%
    \label{stp:sk}%
    Compute a step \(\hsk\) such that \(m(\hsk; x_k, \sigma_k) \leq m(\hskcp; x_k, \sigma_k)\).
    \If{%
      \label{stp:theta2}%
      \(\| \hsk \| > \theta_2 \| \hskcp \|\)}
    \State Reset \(\hsk = \hskcp\).
    \EndIf
    \Until{\(\hf(x_k)\) and \(\hnf(x_k)\) satisfy \Cref{ass:bound-inexact-f-nf}}.
    \State Compute the ratio \(\widehat{\rho}_k\) as in~\eqref{eq:def-rhok}.
    \If{\(\widehat{\rho}_k \geq \widehat{\eta}_1\)}
    \State Set \(x_{k+1} = x_k + \hsk\).
    \Else
    \State Set \(x_{k+1} = x_k\).
    \EndIf
    \State Update the regularization parameter according to
    \[
      \sigma_{k+1} \in
      \begin{cases}
        \begin{aligned}
           & [\gamma_3 \sigma_k, \, \sigma_k]          &  & \text{ if } \widehat{\rho}_k \geq \widehat{\eta}_2,                    &  & \qquad \text{\textcolor{gray}{very successful iteration}} \\
           & [\sigma_k, \, \gamma_1 \sigma_k]          &  & \text{ if } \widehat{\eta}_1 \leq \widehat{\rho}_k < \widehat{\eta}_2, &  & \qquad \text{\textcolor{gray}{successful iteration}}      \\
           & [\gamma_1 \sigma_k, \, \gamma_2 \sigma_k] &  & \text{ if } \widehat{\rho}_k < \widehat{\eta}_1                        &  & \qquad \text{\textcolor{gray}{unsuccessful iteration}}
        \end{aligned}
      \end{cases}
    \]
    \State Reset \(\sigma_{k+1} = \max(\sigma_{k+1}, \, \sigma_{\min})\).
    \EndFor
  \end{algorithmic}
\end{algorithm}

\subsection{Assumptions}

Intentionally, our assumptions are not the most general under which convergence of \Cref{alg:ir2n} can be shown to occur.
We have done so in order to highlight the influence of our assumptions on the inexactness of the objective, gradient and proximal operators evaluations on the analysis.
We refer the interested reader to \citep{diouane-habiboullah-orban-2024} for the current most general assumptions.
Nonetheless, we expect that our convergence guarantees remain valid under the weaker assumptions, at the cost of a more intricate analysis.

Our first assumption concerns Lipschitz-continuity of the gradient.
Technically, this assumption is only necessary for the complexity analysis; convergence can be guaranteed under continuous differentiability only.

\begin{assumption}
  \label{ass:lipschitz-gradient}
  \(\nabla f\) is Lipschitz-continuous with constant \(L \geq 0\)---see~\eqref{eq:lipschitz-gradient}.
\end{assumption}

We assume that \(\{B_k\}\) is bounded; a common assumption in the literature.
Under appropriate growth conditions, convergence is preserved even if \(\{B_k\}\) is allowed to grow unbounded \citep{diouane-habiboullah-orban-2024}.

\begin{assumption}
  \label{ass:hessian-bound}
  There exists \(\kappa_B > 0\) such that \(\|B_k\| \leq \kappa_B\) for all \(k\).
\end{assumption}

\Cref{ass:hessian-bound} is trivially satisfied when \(B_k = 0\), as in \citep[Algorithm~\(6.1\)]{aravkin-baraldi-orban-2022}.
It is also satisfied in \citep{becker-fadili-ochs-2019} where the objective is strongly convex and the model Hessian is defined by a positive definite limited-memory quasi-Newton update.
Under standard assumptions, the LBFGS and LSR1 updates satisfy \Cref{ass:hessian-bound} \citep{burdakov-gong-zikrin-yuan-2017,aravkin-baraldi-orban-2021}.

Our next assumption bounds the discrepancy between \(h\) and its model \(\psi\).

\begin{assumption}
  \label{ass:psi-accuracy}
  There exists \(\kappa_h > 0\) such that
  \(
  |\psi(x, s) - h(x + s)| \leq \kappa_h\|s\|^2
  \)
  for all \(x\) and \(s \in \R^n\).
\end{assumption}

The bound \(\|s\|^2\) in \Cref{ass:psi-accuracy} can be relaxed to \(o(\|s\|)\) \citep{diouane-habiboullah-orban-2024}.
\Cref{ass:psi-accuracy} is satisfied when \(\psi(s; x) = h(x + s)\), and when \(h(x) = g(c(x))\) where \(c\) is twice continuously differentiable with bounded second derivatives and \(g\) is globally Lipschitz continuous if we select \(\psi(s; x) = g(c(x) + \nabla c(x)^T s)\).

The next assumption drives the convergence analysis and states that the step \(\hsk\) computed at iteration \(k\) should result in a decrease at least comparable to that induced by the approximate Cauchy step in the first-order model.

\begin{assumption}
  \label{ass:cauchy-decrease}
  There is \(\theta_1 \in (0,1)\) such that
  \(
  \varphi(0;x_k) + \psi(0;x_k) - (\varphi(\hsk;x_k) + \psi(\hsk;x_k)) \geq (1-\theta_1)\hxikcp
  \)
  for all \(k\).
\end{assumption}

\noindent
As we now show, \Cref{ass:cauchy-decrease} holds for \(\hsk\) computed as stated in \Cref{alg:ir2n}.

\begin{lemma}%
  For \(\theta_1 \in (0,1)\) and \(\hsk\) as in \Cref{alg:ir2n}, \Cref{ass:cauchy-decrease} holds.
\end{lemma}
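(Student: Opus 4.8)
The plan is to carry out the entire argument inside the second-order model $m$, exploiting the fact that, by Step~\ref{stp:sk}, the computed step $\hsk$ decreases $m$ at least as much as the approximate Cauchy step $\hskcp$ does.

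First I would rewrite the left-hand side of Assumption~\ref{ass:cauchy-decrease} using the identity $\varphi(s; x_k) + \psi(s; x_k) = m(s; x_k, \sigma_k) - \tfrac12 \sigma_k \|s\|^2$, which is immediate from~\eqref{eq:model-second-order}. Evaluating at $s = 0$ and $s = \hsk$ and using $\psi(0; x_k) = h(x_k)$ (Assumption~\ref{ass:psi-prox-bounded}) gives
\[
  \varphi(0; x_k) + \psi(0; x_k) - \bigl(\varphi(\hsk; x_k) + \psi(\hsk; x_k)\bigr) = m(0; x_k, \sigma_k) - m(\hsk; x_k, \sigma_k) + \tfrac12 \sigma_k \|\hsk\|^2.
\]
Since $\tfrac12 \sigma_k \|\hsk\|^2 \ge 0$ and Step~\ref{stp:sk} yields $m(\hsk; x_k, \sigma_k) \le m(\hskcp; x_k, \sigma_k)$, the right-hand side is bounded below by $m(0; x_k, \sigma_k) - m(\hskcp; x_k, \sigma_k)$. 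This holds regardless of the reset in Step~\ref{stp:theta2}: after a reset $\hsk = \hskcp$ and the Step~\ref{stp:sk} inequality becomes an equality, so the $\theta_2$-test plays no role in this lemma.

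Next I would expand $m(0; x_k, \sigma_k) - m(\hskcp; x_k, \sigma_k)$ by splitting $\varphi(s; x_k) = \varphicp(s; x_k) + \tfrac12 s^T B_k s$ and recognizing $(\varphicp + \psi)(0; x_k) - (\varphicp + \psi)(\hskcp; x_k) = \hxikcp$ from~\eqref{eq:hxicp}, which gives
\[
  m(0; x_k, \sigma_k) - m(\hskcp; x_k, \sigma_k) = \hxikcp - \tfrac12 \hskcp^T B_k \hskcp - \tfrac12 \sigma_k \|\hskcp\|^2.
\]
Bounding the indefinite term by $\hskcp^T B_k \hskcp \le \|B_k\|\,\|\hskcp\|^2$ and collecting the quadratic terms leaves the lower bound $\hxikcp - \tfrac12 (\|B_k\| + \sigma_k) \|\hskcp\|^2$. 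Substituting the step length from Step~\ref{stp:nuk}, namely $\|B_k\| + \sigma_k = \theta_1 \nu_k^{-1}$, turns this into $\hxikcp - \tfrac12 \theta_1 \nu_k^{-1} \|\hskcp\|^2$. Finally, the simple-decrease inequality~\eqref{eq:hxikcp-inequality}, $\tfrac12 \nu_k^{-1} \|\hskcp\|^2 \le \hxikcp$, gives $\hxikcp - \tfrac12 \theta_1 \nu_k^{-1} \|\hskcp\|^2 \ge (1 - \theta_1)\hxikcp$, which is exactly Assumption~\ref{ass:cauchy-decrease}.

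The only step requiring attention is the applicability of~\eqref{eq:hxikcp-inequality}: it holds whenever $\hskcp$ is generated by a descent procedure on $\mcp(\cdot; x_k, \nu_k^{-1})$ started from $s = 0$, so that $\hskcp$ produces at least simple decrease relative to the origin. This is precisely how $\hskcp$ is computed in Step~\ref{stp:skcp}, so the inequality is available and the chain above closes. Everything else is routine rearrangement, and I do not anticipate a genuine obstacle beyond confirming this simple-decrease property.
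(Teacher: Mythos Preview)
Your argument is correct and is precisely the computation that the paper defers to \citep[Proposition~3]{diouane-habiboullah-orban-2024}: drop the nonnegative term \(\tfrac12\sigma_k\|\hsk\|^2\), use \(m(\hsk;x_k,\sigma_k)\le m(\hskcp;x_k,\sigma_k)\), expand \(m(0)-m(\hskcp)\), bound the quadratic via \(\|B_k\|+\sigma_k=\theta_1\nu_k^{-1}\), and finish with~\eqref{eq:hxikcp-inequality}. Your remark that the \(\theta_2\)-reset is irrelevant here, and your caveat that~\eqref{eq:hxikcp-inequality} requires \(\hskcp\) to produce simple decrease in \(\mcp\), are both accurate.
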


\begin{proof}
  The proof of \citep[Proposition~\(3.7\)]{diouane-habiboullah-orban-2024} applies with \(s = \hsk\) and \(\hskcp\) in place of \(\scp\).
  Indeed, it remains valid for any \(s \in \R^n\) and \(\scp \in \R^n\) as long as \(m(s; x, \sigma) \leq m(s_{\text{cp}}; x, \sigma)\), which is guaranteed by step 7 of \Cref{alg:ir2n}.
  \qed
\end{proof}

We ensure that Step 7 in \Cref{alg:ir2n} holds because the inexact Cauchy step \(\hskcp\) coincides with the first (inexact) step of the proximal gradient method applied to \(m(s; x_k, \sigma_k)\) from \(s=0\) with an appropriate step length \(\nu_k\).
Therefore, computing \(\hsk\) by continuing the proximal iterations from \(\hskcp\) leads to further decrease in \(m(s; x_k, \sigma_k)\).

The next assumption requires the norm of the computed step \(\hskcp\) to be at least a fraction of that of an exact step \(\skcp\).

\begin{assumption}
  \label{ass:inexact-decrease}
  There exists \(\kappa_s \in (0,1]\) such that, for all \(k\),
  \[
    \|\hskcp\| \geq \kappa_s \min \{ \|\skcp\| \mid \skcp \in \prox{\nu_k \psi(\cdot; x_k)} (-\nu_k \hnf(x_k)) \}.
  \]
\end{assumption}

In the experiments of \Cref{sec:implementation}, \(\psi(\cdot; x_k)\) satisfies the assumptions of \Cref{prop:skcp} and, therefore, the minimum in \Cref{ass:inexact-decrease} is well defined.

\Cref{ass:inexact-decrease} holds when \(\skcp\) is computed exactly, i.e., \(\hskcp = \skcp\).
Indeed, let \(\|s_{k,\min}\|\) be the smallest norm across all possible choices of \(\skcp\).
Several cases can occur.
Firstly, if \(\|s_{k,\min}\| > 0\), then \(\|\skcp\| > 0\) necessarily, and \Cref{ass:inexact-decrease} holds with \(\kappa_s \defeq \min(1, \, \|\skcp\| / \|s_{k,\min}\|)\).
If, on the other hand, \(\|s_{k,\min}\| = 0\), the same holds if we compute \(\skcp \neq 0\) but, should we compute \(\skcp = 0\), \Cref{prop:stationarity} would imply that \(x_k\) is stationary and the iterations would stop.
This case will be clarified in \Cref{lem:finite-succ}.

Though \Cref{ass:inexact-decrease} may appear as a simple decrease assumption, sufficient decrease is provided by~\eqref{eq:hxikcp-inequality}.

Details on how we satisfy \Cref{ass:inexact-decrease} when \(\hskcp \neq \skcp\) in certain situations relevant in practice can be found in \Cref{sec:implementation}.
We further comment on \Cref{ass:inexact-decrease} in \Cref{sec:discussion}.

In the same fashion as \citep{monnet-orban-2025}, we bound evaluation errors in terms of the step.
Similar assumptions are made in \citep{conn-gould-toint-2000} in a trust-region context.


\begin{assumption}%
  \label{ass:bound-inexact-f-nf}
  There exist \(\kappa_f > 0\) and \(\knf > 0\) such that, for all \(k \in \N\),
  \begin{align}%
    |f(x_k) - \hf(x_k)|               & \leq \kf \|\hsk\|^2, \\
    |f(x_k + \hsk) - \hf(x_k + \hsk)| & \leq \kf \|\hsk\|^2, \\
    \|\nabla f(x_k) - \hnf(x_k)\|     & \leq \knf \|\hsk\|.
  \end{align}
\end{assumption}

Finally, we assume that the objective is bounded below, which is only required in the complexity analysis.

\begin{assumption}
  \label{ass:fh-lower-bound}
  There exists \((f + h)_{\mathrm{low}} \in \R\) such that
  \(
  (f + h)(x) \geq (f + h)_{\mathrm{low}}
  \)
  for all \(x \in \R^n\).
\end{assumption}

\subsection{Convergence Analysis}

Our first result relates the decrease predicted by the model to the step size.

\begin{lemma}%
  \label{lem:xicp-snorm}
  Let \Cref{ass:cauchy-decrease} hold.
  Then,
  \[
    \varphi(0; x_k) + \psi(0; x_k) - (\varphi(s_k; x_k) + \psi(s_k; x_k)) \geq
    \tfrac{1}{2} \sigma_k \|\hsk\|^2.
  \]
\end{lemma}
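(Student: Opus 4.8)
The plan is to bound the model decrease from below by a short chain of three inequalities that successively convert the guaranteed Cauchy-type decrease into a multiple of \(\nu_k^{-1}\|\hsk\|^2\). First I would invoke \Cref{ass:cauchy-decrease} verbatim, which states precisely that the left-hand side is at least \((1-\theta_1)\hxikcp\). All remaining work is then to lower-bound \(\hxikcp\) in terms of \(\|\hsk\|\).

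Second, I would use the inexact Cauchy-decrease inequality~\eqref{eq:hxikcp-inequality}, namely \(\hxikcp \geq \tfrac{1}{2}\nu_k^{-1}\|\hskcp\|^2\). This bound is available because \(\hskcp\) is produced by a descent procedure on the first-order model~\eqref{eq:model-first-order} started from \(s = 0\): since \(\nabla \varphicp(\cdot; x_k)\) has Lipschitz constant zero, \Cref{prop:sufficient-decrease} applies and guarantees the desired quadratic decrease for any iterate of that procedure. This step replaces \(\hxikcp\) by a multiple of \(\|\hskcp\|^2\).

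The third and only mildly delicate step relates \(\|\hskcp\|\) to \(\|\hsk\|\) through the safeguard in step~\ref{stp:theta2} of \Cref{alg:ir2n}. The point to verify is that, after that step, the bound \(\|\hsk\| \leq \theta_2 \|\hskcp\|\) holds \emph{unconditionally}: if the test \(\|\hsk\| > \theta_2\|\hskcp\|\) fails, the bound holds by definition; if it succeeds, the algorithm resets \(\hsk = \hskcp\), and because \(\theta_2 > 1\) we still have \(\|\hsk\| = \|\hskcp\| \leq \theta_2\|\hskcp\|\). Squaring yields \(\|\hskcp\|^2 \geq \theta_2^{-2}\|\hsk\|^2\). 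Substituting this into the previous two inequalities produces the chain
\[
  \varphi(0; x_k) + \psi(0; x_k) - (\varphi(\hsk; x_k) + \psi(\hsk; x_k))
  \geq (1-\theta_1)\hxikcp
  \geq \tfrac{1}{2}(1-\theta_1)\nu_k^{-1}\|\hskcp\|^2
  \geq \tfrac{1}{2}(1-\theta_1)\theta_2^{-2}\nu_k^{-1}\|\hsk\|^2,
\]
which is exactly the claim. I do not anticipate any genuine obstacle here; the argument is a routine concatenation of results already in hand, and the single thing to be careful about is that the \(\theta_2\)-safeguard be accounted for in \emph{both} of its branches so that the step-ratio bound \(\|\hsk\| \leq \theta_2\|\hskcp\|\) is valid at every iteration regardless of whether a reset occurred.
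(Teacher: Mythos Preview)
Your proof is correct and follows exactly the same three-step chain as the paper: \Cref{ass:cauchy-decrease}, then~\eqref{eq:hxikcp-inequality}, then the safeguard of line~\ref{stp:theta2} in \Cref{alg:ir2n}. Your explicit check that \(\|\hsk\| \leq \theta_2 \|\hskcp\|\) holds in both branches of the safeguard is a nice clarification that the paper leaves implicit.
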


\begin{proof}
  Line~\ref{stp:sk} of \Cref{alg:ir2n} ensures that \(m(\hsk; x_k, \sigma_k) \leq m(\hskcp; x_k, \sigma_k)\).
  By~\eqref{eq:hxikcp-inequality}, the value of \(\nu_k\) ensures that
  \[
    \hxikcp \geq \tfrac{1}{2} \nu_k^{-1} \|\hskcp\|^2 \geq \tfrac{1}{2} (\|B_k\| + \sigma_k) \|\hskcp\|^2 \geq \tfrac{1}{2} \hskcp^T (B_k + \sigma_k I) \hskcp,
  \]
  which is equivalent to saying that \(m(\hskcp; x_k, \sigma_k) \leq m(0; x_k, \sigma_k)\).
  Hence, \(\varphi(\hsk; x_k) + \psi(\hsk; x_k) + \tfrac{1}{2} \sigma_k \|\hsk\|^2 \leq \varphi(0; x_k) + \psi(0; x_k)\), which establishes the result.
  \qed
\end{proof}

Our next result mirrors \citep[Theorem 4.1]{aravkin-baraldi-orban-2024} and shows that whenever \(\sigma_k\) exceeds a threshold \(\sigma_\mathrm{succ}\), iteration \(k\) is very successful and \(\sigma_{k+1}\) decreases.

\begin{lemma}
  \label{lem:sigma-succ}
  Let \Cref{ass:lipschitz-gradient,ass:hessian-bound,ass:psi-accuracy,ass:cauchy-decrease,ass:bound-inexact-f-nf} be satisfied and define
  \[
    \sigma_{\mathrm{succ}} \defeq \max\left(
    \frac{ L + \kappa_B + 2 \kappa_h + 4 \kf + 2 \knf }{ 1 - \widehat{\eta}_2 },
    \, \lambda^{-1}
    \right) > 0.
  \]
  If, at iteration \(k\) of \Cref{alg:ir2n}, \(\hsk \neq 0\) and \(\sigma_k \geq \sigma_{\mathrm{succ}}\), then \(\widehat{\rho}_k \geq \widehat{\eta}_2\), and iteration \(k\) is very successful.
\end{lemma}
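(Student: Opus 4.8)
The plan is to show that the predicted decrease (the denominator of $\widehat{\rho}_k$) overestimates the achieved decrease (its numerator) by at most $O(\|\hsk\|^2)$, while \Cref{lem:xicp-snorm} already guarantees that the predicted decrease is itself at least of order $\sigma_k\|\hsk\|^2$; forcing $\sigma_k \ge \sigma_{\mathrm{succ}}$ then pushes the ratio above $\widehat{\eta}_2$. First I would write $D$ for the denominator of $\widehat{\rho}_k$ in~\eqref{eq:def-rhok} and $N$ for its numerator, so that $\widehat{\rho}_k = N/D$. Expanding the definition~\eqref{eq:phi-second-order} of $\varphi$ at $s=0$ and at $s=\hsk$, and using $\psi(0;x_k)=h(x_k)$ from \Cref{ass:psi-prox-bounded}, the $\hf(x_k)$ and $h(x_k)$ contributions cancel and I obtain
\[
D - N = [\hf(x_k+\hsk) - \hf(x_k) - \hnf(x_k)^T\hsk] + [h(x_k+\hsk) - \psi(\hsk;x_k)] - \tfrac{1}{2}\hsk^T B_k \hsk.
\]

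Next I would bound the three brackets separately. For the smooth term I would insert and remove the exact $f$ and $\nf$ and apply the triangle inequality: \Cref{ass:lipschitz-gradient} via~\eqref{eq:lipschitz-gradient} controls $f(x_k+\hsk)-f(x_k)-\nf(x_k)^T\hsk$ by $\tfrac12 L\|\hsk\|^2$, while the two function-value gaps in~\eqref{eq:bound-inexact-f} and the gradient gap in~\eqref{eq:bound-inexact-nf} (the latter through Cauchy--Schwarz, $|(\hnf(x_k)-\nf(x_k))^T\hsk|\le\knf\|\hsk\|^2$) contribute $\kf\|\hsk\|^2$, $\kf\|\hsk\|^2$ and $\knf\|\hsk\|^2$. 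The second bracket is at most $\kappa_h\|\hsk\|^2$ by \Cref{ass:psi-accuracy}, and the curvature term is at most $\tfrac12\kappa_B\|\hsk\|^2$ by \Cref{ass:hessian-bound}. Summing gives $D - N \le \tfrac12(L+\kappa_B+2\kappa_h+4\kf+2\knf)\|\hsk\|^2$, whose coefficient is exactly the bracket appearing in $\sigma_{\mathrm{succ}}$.

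Then I would invoke \Cref{lem:xicp-snorm} and substitute $\nu_k^{-1}=(\|B_k\|+\sigma_k)/\theta_1\ge\sigma_k/\theta_1$ to get $D\ge\tfrac12(1-\theta_1)\theta_2^{-2}(\sigma_k/\theta_1)\|\hsk\|^2$, which is strictly positive because $\hsk\neq0$; positivity of $D$ is what makes dividing by it legitimate. Dividing the upper bound on $D-N$ by this lower bound on $D$ yields $1-\widehat{\rho}_k=(D-N)/D\le\theta_1\theta_2^2(L+\kappa_B+2\kappa_h+4\kf+2\knf)/((1-\theta_1)\sigma_k)$, and the hypothesis $\sigma_k\ge\sigma_{\mathrm{succ}}$ bounds the right-hand side by $1-\widehat{\eta}_2$, so $\widehat{\rho}_k\ge\widehat{\eta}_2$ and iteration $k$ is very successful.

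The main obstacle I expect is the bookkeeping that turns the \emph{inexact} model error into an exact Lipschitz bound: the quantity $\hf(x_k+\hsk)-\hf(x_k)-\hnf(x_k)^T\hsk$ involves only quantities we can access inexactly, so the whole argument hinges on \Cref{ass:bound-inexact-f-nf} being phrased in terms of $\|\hsk\|$ rather than an independent tolerance, which is precisely what lets every error be absorbed into a single multiple of $\|\hsk\|^2$. I would also flag the role of the $\lambda^{-1}$ branch of $\sigma_{\mathrm{succ}}$: since $\theta_1<1$, the bound $\sigma_k\ge\lambda^{-1}$ forces $\nu_k=\theta_1/(\|B_k\|+\sigma_k)<\lambda$, which is exactly the condition under which \Cref{prop:skcp} and the Cauchy-decrease inequality~\eqref{eq:hxikcp-inequality} underlying \Cref{lem:xicp-snorm} are valid.
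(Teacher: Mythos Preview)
Your proof is correct and follows essentially the same route as the paper's: compute the difference between predicted and achieved decrease, bound each piece by a multiple of $\|\hsk\|^2$ via \Cref{ass:lipschitz-gradient,ass:hessian-bound,ass:psi-accuracy,ass:bound-inexact-f-nf}, and divide by the lower bound on the denominator from \Cref{lem:xicp-snorm} together with $\nu_k^{-1}\ge\sigma_k/\theta_1$. The only cosmetic difference is that the paper bounds $|\widehat{\rho}_k-1|$ directly, whereas you bound $1-\widehat{\rho}_k=(D-N)/D$; both yield the same inequality, and your remarks on the role of the $\lambda^{-1}$ branch and on \Cref{ass:bound-inexact-f-nf} being scaled by $\|\hsk\|$ are apt.
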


\begin{proof}
  As in the proof of \citep[Theorem~\(4.1\)]{aravkin-baraldi-orban-2024}, \(\sigma_k\) increases as long as it is below \(\lambda_{x_k}^{-1}\).
  Thus, we assume that \(\sigma_k \geq \lambda^{-1}\).
  The definitions of \(\widehat{\rho}_k\) and \(\varphi\), \Cref{ass:cauchy-decrease}, the triangle inequality and \Cref{lem:xicp-snorm} yield
  \begin{align*}
     & |\widehat{\rho}_k - 1|
    \notag                                                                                                                \\
     & = \frac{
           |\hf(x_k + \hsk) - \hf(x_k) - \hnf(x_k)^T \hsk - \tfrac{1}{2}\hsk^T B_k \hsk + h(x_k + \hsk) - \psi(\hsk; x_k)|
         }{
           \varphi(0;x) + \psi(0;x) - (\varphi(\hsk;x_k) + \psi(\hsk;x_k))
         }
    \notag
    \\
     & \leq
    \frac{
      |\hf(x_k + \hsk) - \hf(x_k) - \hnf(x_k)^T \hsk|
      +
      |\tfrac{1}{2} \hsk^T B_k \hsk|
      +
      |h(x_k + \hsk) - \psi(\hsk; x_k)|
    }{
      \tfrac{1}{2} \sigma_k \|\hsk\|^2
    }.
  \end{align*}

  The triangle inequality along with \Cref{ass:lipschitz-gradient,ass:bound-inexact-f-nf} bound the first term in the numerator as
  \begin{multline*}
    |\hf(x_k + \hsk) - \hf(x_k) - \hnf(x_k)^T \hsk| \\
    \leq |f(x_k + \hsk) - f(x_k) - \nf(x_k)^T \hsk| + 2 \kf \|\hsk\|^2 + \knf \|\hsk\|^2 \\
    \leq (\tfrac{1}{2} L + 2 \kf + \knf) \|\hsk\|^2.
  \end{multline*}
  \Cref{ass:hessian-bound} bounds the second term in the numerator by \(\tfrac{1}{2} \|B_k\| \|\hsk\|^2 \leq \tfrac{1}{2} \kappa_B \|\hsk\|^2\).
  \Cref{ass:psi-accuracy} bounds the last term in the numerator by \(\kappa_h \|\hsk\|^2\).
  After simplifying by \(\|\hsk\|^2\), those observations give
  \[
    |\widehat{\rho}_k - 1|
    \leq \frac{ L + \kappa_B + 2 \kappa_h + 4 \kf + 2 \knf }{ \sigma_k }.
  \]
  Therefore, \(\sigma_k \geq \sigma_{\text{succ}}\) implies that \(\widehat{\rho}_k \geq \widehat{\eta}_2\).
  \qed
\end{proof}

In \cref{lem:sigma-succ}, we showed that \(\sigma_k \geq \sigma_{\mathrm{succ}} \implies \widehat{\rho}_k \geq \widehat{\eta}_2\), which means that there is a decrease in \(\hf + h\).
Next, we show that a decrease occurs in \(f + h\) every time a step is accepted.

\begin{lemma}%
  \label{lem:link-eta_2-inexact}
  Let \Cref{ass:bound-inexact-f-nf,ass:cauchy-decrease} hold.
  At iteration \(k\), denote
  \[
    \rho_k \defeq \frac{f(x_k) + h(x_k) - (f(x_k + \hsk) + h(x_k + \hsk))}{\varphi(0; x_k) + \psi(0; x_k) - (\varphi(\hsk; x_k) + \psi(\hsk; x_k))}
  \]
  the measure of agreement between the actual and predicted decrease in \(f + h\).
  Let \(\sigma_{\min}\) be as in \Cref{alg:ir2n} and
  \[
    \eta_1 \defeq \widehat{\eta}_1 - \frac{ 4 \kf }{ \sigma_{\min} } > 0,
    \qquad
    \eta_2 \defeq \widehat{\eta}_2 - \frac{ 4 \kf }{ \sigma_{\min} } > 0.
  \]
  Then, \(\widehat{\rho}_k \geq \widehat{\eta}_1\) \(\Longrightarrow\) \(\rho_k \geq \eta_1\) and \(\widehat{\rho}_k \geq \widehat{\eta}_2\) \(\Longrightarrow\) \(\rho_k \geq \eta_2\).
\end{lemma}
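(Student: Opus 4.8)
The plan is to bound the one-sided difference $\rho_k - \widehat{\rho}_k$ directly, exploiting the fact that the two ratios share the identical denominator $D_k \defeq \varphi(0;x_k) + \psi(0;x_k) - (\varphi(\hsk;x_k) + \psi(\hsk;x_k))$. First I would subtract the two numerators. Since both contain the \emph{exact} $h$ evaluated at $x_k$ and at $x_k + \hsk$, those contributions cancel, so no inexactness in $h$ enters the analysis and the numerator difference reduces to $[f(x_k) - \hf(x_k)] - [f(x_k+\hsk) - \hf(x_k+\hsk)]$. Consequently,
\[
  \rho_k - \widehat{\rho}_k = \frac{[f(x_k)-\hf(x_k)] - [f(x_k+\hsk)-\hf(x_k+\hsk)]}{D_k},
\]
and the triangle inequality together with the first two bounds of \Cref{ass:bound-inexact-f-nf} controls the numerator by $2\kf\|\hsk\|^2$.

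Next I would lower-bound the common denominator. Because \Cref{ass:cauchy-decrease} is in force, \Cref{lem:xicp-snorm} gives $D_k \geq \tfrac{1}{2}(1-\theta_1)\theta_2^{-2}\nu_k^{-1}\|\hsk\|^2$. Dividing the numerator bound by this lower bound cancels the factors $\|\hsk\|^2$ and leaves $|\rho_k - \widehat{\rho}_k| \leq 4\kf\theta_2^2\nu_k/(1-\theta_1)$. Substituting $\nu_k = \theta_1/(\|B_k\| + \sigma_k) \leq \theta_1/\sigma_k \leq \theta_1/\sigma_{\min}$, where the last step uses $\sigma_k \geq \sigma_{\min}$ enforced by the reset in \Cref{alg:ir2n}, yields
\[
  |\rho_k - \widehat{\rho}_k| \leq \frac{4\kf\theta_1\theta_2^2}{(1-\theta_1)\sigma_{\min}},
\]
which is precisely the quantity subtracted in the definitions of $\eta_1$ and $\eta_2$.

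Finally, the bound $\rho_k \geq \widehat{\rho}_k - 4\kf\theta_1\theta_2^2/((1-\theta_1)\sigma_{\min})$ delivers both implications simultaneously: $\widehat{\rho}_k \geq \widehat{\eta}_i$ forces $\rho_k \geq \widehat{\eta}_i - 4\kf\theta_1\theta_2^2/((1-\theta_1)\sigma_{\min}) = \eta_i$ for $i \in \{1,2\}$. Positivity of $\eta_1$, and hence of $\eta_2 \geq \eta_1$, follows from the choice $\sigma_{\min} > 4\kf\theta_1\theta_2^2/(\widehat{\eta}_1(1-\theta_1))$ prescribed in \Cref{alg:ir2n}.

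I do not expect a genuine obstacle here: the argument is a routine one-sided perturbation estimate. The two points requiring care are the cancellation of the $h$-terms, which confines the error analysis to the inexactness of $f$ alone rather than that of the proximal model, and the implicit requirement $\hsk \neq 0$, without which $D_k$ may vanish and the ratios are undefined. In the latter degenerate case, \Cref{prop:stationarity} already signals (approximate) stationarity, so the iteration never reaches the acceptance test and the statement applies only where it is meaningful.
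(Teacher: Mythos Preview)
Your proposal is correct and follows essentially the same route as the paper: both subtract the two ratios, note that the shared denominator and the exact $h$-terms leave only $[f(x_k)-\hf(x_k)]-[f(x_k+\hsk)-\hf(x_k+\hsk)]$ in the numerator, bound it by $2\kf\|\hsk\|^2$ via \Cref{ass:bound-inexact-f-nf}, lower-bound the denominator via \Cref{lem:xicp-snorm}, and use $\nu_k \leq \theta_1/\sigma_{\min}$. Your additional remarks on the $h$-cancellation and the degenerate case $\hsk = 0$ are helpful annotations but not points of divergence.
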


\begin{proof}
  By definition of \(\widehat{\rho}_k\) and \(\rho_k\),
  \begin{equation*}
    \widehat{\rho}_k = \rho_k + \frac{(\hf - f)(x_k) + (f - \hf)(x_k + \hsk)}{(\varphi + \psi)(0; x_k) - (\varphi + \psi)(\hsk; x_k)}.
  \end{equation*}
  Because \Cref{alg:ir2n} enforces \(\sigma_k \geq \sigma_{\min} > 0\), \Cref{lem:xicp-snorm} and \Cref{ass:bound-inexact-f-nf} give
  \begin{equation*}
    \lvert \widehat{\rho}_k - \rho_k \rvert \leq
    \frac{
      2 \kappa_f \|\hsk\|^2
    }{
      \tfrac{1}{2} \sigma_k \|\hsk\|^2
    }
    \leq
    \frac{
      4 \kf
    }{
      \sigma_{\min}
    }.
  \end{equation*}
  Now, if \(\widehat{\rho}_k \geq \widehat{\eta}_1\),
  \[
    \rho_k \geq \widehat{\eta}_1 - \frac{ 4 \kf }{ \sigma_{\min} } = \eta_1.
  \]
  The lower bound on \(\sigma_{\min}\) ensures \(\eta_1 > 0\).
  The same holds for \(\eta_2\) because \(\widehat{\eta}_2 \geq \widehat{\eta}_1\).
  \qed
\end{proof}

\noindent
\Cref{lem:sigma-succ,lem:link-eta_2-inexact} together imply that \(\widehat{\rho}_k \geq \widehat{\eta}_1\) guarantees a decrease in \(f + h\).

The next result is classic and considers the case where only a finite number of successful iterations occur.

\begin{lemma}
  \label{lem:finite-succ}
  Let \Cref{ass:lipschitz-gradient,ass:hessian-bound,ass:psi-accuracy,ass:cauchy-decrease,ass:bound-inexact-f-nf} be satisfied.
  Suppose \Cref{alg:ir2n} generates finitely many successful iterations.
  Then \(x_k = x_\star\) and \(\hskcp = 0\) for all \(k\) sufficiently large.
  If, in addition, \Cref{ass:inexact-decrease} holds, \(x_\star\) is stationary.
\end{lemma}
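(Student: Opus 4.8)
The plan is to combine the unbounded growth of $\sigma_k$ on unsuccessful iterations with \Cref{lem:sigma-succ} to force the computed step to vanish, and then to trace $\hsk = 0$ back through the inexactness assumptions to exact stationarity. First I would dispatch the trivial half of the statement: if only finitely many iterations are successful, let $k_0$ be the index of the last successful one; every iteration $k > k_0$ is unsuccessful, so \Cref{alg:ir2n} sets $x_{k+1} = x_k$, whence $x_k = x_{k_0+1} =: x_\star$ for all $k \geq k_0 + 1$.

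Next I would exploit the update rule for $\sigma_k$. On each unsuccessful iteration $\widehat{\rho}_k < \widehat{\eta}_1$, so $\sigma_{k+1} \geq \gamma_1 \sigma_k$ with $\gamma_1 > 1$; since all iterations past $k_0$ are unsuccessful, $\{\sigma_k\}$ grows geometrically and $\sigma_k \to \infty$. In particular there is an index beyond which $\sigma_k \geq \sigma_{\mathrm{succ}}$, with $\sigma_{\mathrm{succ}}$ as in \Cref{lem:sigma-succ}. For such $k$, were $\hsk \neq 0$, \Cref{lem:sigma-succ} would make the iteration very successful, contradicting unsuccessfulness; hence $\hsk = 0$ for all sufficiently large $k$.

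It then remains to convert $\hsk = 0$ into stationarity of $x_\star$. Setting $\hsk = 0$ in \Cref{ass:cauchy-decrease} makes its left-hand side vanish, so $(1-\theta_1)\hxikcp \leq 0$; combined with $\hxikcp \geq \tfrac{1}{2}\nu_k^{-1}\|\hskcp\|^2 \geq 0$ from~\eqref{eq:hxikcp-inequality}, this forces $\hskcp = 0$. \Cref{ass:inexact-decrease} with $\kappa_s > 0$ then yields $\min\{\|\skcp\| \mid \skcp \in \prox{\nu_k \psi(\cdot; x_k)}(-\nu_k \hnf(x_k))\} = 0$, so that $\skcp = 0$ is an exact minimizer of $\mcp(\cdot; x_k, \nu_k^{-1})$; this set is nonempty because $\sigma_k \to \infty$ drives $\nu_k = \theta_1/(\|B_k\|+\sigma_k)$ below the uniform prox-boundedness threshold, exactly as already invoked in \Cref{lem:sigma-succ}, so \Cref{prop:skcp} applies. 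Applying \Cref{prop:stationarity} gives $0 \in \hnf(x_k) + \partial h(x_k)$. Finally, $\hsk = 0$ in~\eqref{eq:bound-inexact-nf} gives $\hnf(x_k) = \nabla f(x_k)$, whence $0 \in \nabla f(x_k) + \partial h(x_k)$ and $x_k = x_\star$ is first-order stationary.

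The routine parts—the geometric growth of $\sigma_k$ and the first-order optimality bookkeeping—are immediate. The step requiring the most care is the chain $\hsk = 0 \Rightarrow \hskcp = 0 \Rightarrow$ existence of an exact zero Cauchy step: it is here that \Cref{ass:inexact-decrease} is essential, since it is the only link guaranteeing that a computed step of zero norm certifies a genuine exact step of zero norm rather than merely reflecting premature termination of the inner solver. I would also make explicit that the exact Cauchy-step set is nonempty via \Cref{prop:skcp}, which needs $\nu_k$ below the prox-boundedness threshold, a condition guaranteed by $\sigma_k \to \infty$.
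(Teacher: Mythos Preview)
Your argument follows the paper's strategy—unsuccessful iterations drive $\sigma_k \to \infty$, after which \Cref{lem:sigma-succ} yields a contradiction—but you carry it out directly rather than by contradiction and with considerably more detail. The paper simply asserts that if $x_\star$ is not stationary then the algorithm ``repeatedly computes nonzero steps $s_k$,'' so that \Cref{lem:sigma-succ} eventually produces a successful iteration. Your chain $\hsk = 0 \Rightarrow \hskcp = 0 \Rightarrow$ existence of an exact $\skcp = 0 \Rightarrow$ stationarity is precisely what is needed to justify that terse assertion, and each step is argued correctly.

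One point to flag: in passing from $\hskcp = 0$ to the existence of an exact Cauchy step of zero norm you invoke \Cref{ass:inexact-decrease}, which is \emph{not} among the hypotheses listed in the lemma. This is not a defect in your reasoning—without that assumption a prematurely terminated inner solver could return $\hskcp = 0$ at a non-stationary point and nothing would follow. The paper's own proof is silent on exactly this link, so you have arguably surfaced an omission in the stated hypotheses rather than introduced a gap of your own; it would be worth noting explicitly that \Cref{ass:inexact-decrease} is being used.
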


\begin{proof}
  By assumption, there is \(k_0 \in \N\) such that \(x_k = x_\star\) for all \(k \geq k_0\).
  If \(\hskcp \neq 0\), as of iteration \(k_0\), \Cref{alg:ir2n} repeatedly computes nonzero steps \(s_k\), all of which are rejected, i.e., \(\widehat{\rho}_k < \widehat{\eta}_1\).
  Thus, for all \(k \geq k_0\), \(\sigma_{k+1} \geq \gamma_1 \sigma_k\).
  Hence, for sufficiently large \(k\), \(\sigma_k > \sigma_{\text{succ}}\), which triggers a successful iteration, and is absurd.

  Under \Cref{ass:inexact-decrease}, \(\hskcp = 0\) implies that \(0 \in \prox{\nu_k \psi(\cdot; x_k)}(-\nu_k \hnf(x_k))\).
  Line~\ref{stp:theta2} of \Cref{alg:ir2n} then implies that \(s_k = 0\), and \Cref{ass:bound-inexact-f-nf} implies that \(\hnf(x_k) = \nabla f(x_k)\).
  Thus, \(x_\star\) is stationary.
  \qed
\end{proof}

\Cref{lem:sigma-succ} implies that there exists \(\sigma_{\max} = \min(\sigma_0, \gamma_2 \sigma_{\mathrm{succ}})\) such that \(\sigma_k \leq \sigma_{\max}\) for all \(k \in \N\).
Consequently, \Cref{ass:hessian-bound} yields that for all \(k \in \N\),
\begin{equation}%
  \label{eq:nuk-bounds}
  \nu_{\min} \leq \nu_k \leq \nu_{\max},
  \quad
  \nu_{\min} \defeq \theta_1 / (\kappa_B + \sigma_{\max}),
  \quad
  \nu_{\max} \defeq \theta_1 / \sigma_{\min}.
\end{equation}

Let \(\epsilon > 0\).
We seek a bound on \(k_{\epsilon} \defeq \min\{ k \in \mathbb{N} \mid \nu_k^{-1} \|\hskcp\| < \epsilon \} = |\mathcal{S}(\epsilon)| + |\mathcal{U}(\epsilon)| + 1\), where
\[
  \mathcal{S}(\epsilon) \defeq \{ k \in \N \mid \widehat{\rho}_k \geq \widehat{\eta}_1 \text{ and } k < k_{\epsilon} \}, \mathhfill
  \mathcal{U}(\epsilon) \defeq \{ k \in \N \mid \widehat{\rho}_k < \widehat{\eta}_1 \text{ and } k < k_{\epsilon} \}.
\]

\begin{lemma}
  \label{lem:inf-succ}
  Let \Cref{ass:lipschitz-gradient,ass:hessian-bound,ass:psi-accuracy,ass:cauchy-decrease,ass:fh-lower-bound,ass:bound-inexact-f-nf} be satisfied.
  Assume that \Cref{alg:ir2n} generates infinitely many successful iterations.
  Then,
  \[
    |\mathcal{S}(\epsilon)| \leq \frac{(f+h)(x_0) - (f + h)_{\mathrm{low}}}{\tfrac{1}{2}\eta_1 (1-\theta_1) \nu_{\min}} \ \epsilon^{-2} \defeq \omega_s \epsilon^{-2},
  \]
  where \(\nu_{\min}\) is defined in~\eqref{eq:nuk-bounds}.
\end{lemma}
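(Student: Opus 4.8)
The plan is to show that each successful iteration counted in \(\mathcal{S}(\epsilon)\) forces a decrease in \(f+h\) bounded below by a fixed multiple of \(\epsilon^2\), and then to telescope these decreases against the total variation of \(f+h\), which is finite by \Cref{ass:fh-lower-bound}.

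First I would fix \(k \in \mathcal{S}(\epsilon)\). Such an iteration is successful (\(\widehat{\rho}_k \geq \widehat{\eta}_1\)), so \Cref{lem:link-eta_2-inexact} gives \(\rho_k \geq \eta_1 > 0\). On a successful iteration \(x_{k+1} = x_k + \hsk\), so the actual decrease equals \(\rho_k\) times the predicted decrease; combining this with \Cref{ass:cauchy-decrease} and~\eqref{eq:hxikcp-inequality} yields
\[
  (f+h)(x_k) - (f+h)(x_{k+1}) \geq \eta_1 (1-\theta_1)\hxikcp \geq \tfrac{1}{2}\eta_1 (1-\theta_1)\nu_k^{-1}\|\hskcp\|^2.
\]
Here I use the \(\|\hskcp\|\)-bound that appears in the middle of the chain in the proof of \Cref{lem:xicp-snorm}, rather than the weaker \(\theta_2^{-2}\|\hsk\|^2\) version, because it is \(\hskcp\) that enters the stationarity measure.

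Next I would invoke the stopping rule. Since \(k < k_\epsilon\), we have \(\nu_k^{-1}\|\hskcp\| \geq \epsilon\), hence
\[
  \nu_k^{-1}\|\hskcp\|^2 = \bigl(\nu_k^{-1}\|\hskcp\|\bigr)\,\|\hskcp\| \geq \epsilon\,(\nu_k\epsilon) \geq \nu_{\min}\,\epsilon^2,
\]
using \(\nu_k \geq \nu_{\min}\) from~\eqref{eq:nuk-bounds}. Therefore every successful iteration with \(k < k_\epsilon\) decreases \(f+h\) by at least \(\tfrac{1}{2}\eta_1(1-\theta_1)\nu_{\min}\epsilon^2\).

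Finally I would sum over \(\mathcal{S}(\epsilon)\). Because unsuccessful iterations leave the iterate unchanged, the cumulative decrease over the first \(k_\epsilon\) iterations telescopes to \((f+h)(x_0) - (f+h)(x_{k_\epsilon})\) and equals the sum of the per-iteration decreases over \(k \in \mathcal{S}(\epsilon)\); by \Cref{ass:fh-lower-bound} this is at most \((f+h)(x_0) - (f+h)_{\mathrm{low}}\). Dividing by the uniform per-iteration decrease gives \(|\mathcal{S}(\epsilon)| \leq \omega_s\,\epsilon^{-2}\). The argument is a standard telescoping complexity proof; the only point demanding care is tracking the correct power of the step norm—keeping \(\|\hskcp\|\) throughout—and confirming that unsuccessful iterations contribute nothing to the telescoped sum, so that it is \(|\mathcal{S}(\epsilon)|\), and not \(k_\epsilon\), that inherits the factor \(\epsilon^{-2}\).
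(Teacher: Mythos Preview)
Your proposal is correct and follows essentially the same route as the paper: use \Cref{lem:link-eta_2-inexact} to pass from \(\widehat{\rho}_k \geq \widehat{\eta}_1\) to \(\rho_k \geq \eta_1\), chain \Cref{ass:cauchy-decrease} with~\eqref{eq:hxikcp-inequality} to get the \(\tfrac{1}{2}\eta_1(1-\theta_1)\nu_k^{-1}\|\hskcp\|^2\) lower bound, invoke \(k<k_\epsilon\) and~\eqref{eq:nuk-bounds} to obtain the uniform \(\nu_{\min}\epsilon^2\) decrease, and telescope. The paper writes exactly this chain of inequalities and then defers the telescoping step to the literature as ``classic''; your slightly more explicit treatment of the telescoping sum is fine.
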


\begin{proof}
  Let \(k \in \mathcal{S}(\epsilon)\).
  By definition, \(\widehat{\rho}_k \geq \widehat{\eta}_1\), which, by \Cref{lem:link-eta_2-inexact}, implies that \(\rho_k \geq \eta_1\).
  \Cref{ass:cauchy-decrease},~\eqref{eq:nuk-bounds},~\eqref{eq:hxikcp-inequality} and the fact that \(k < k_\epsilon\) then imply
  \begin{align*}
    (f + h)(x_k) - (f + h)(x_k + s_k) & \geq \eta_1 ((\varphi + \psi)(0; x_k) - (\varphi + \psi)(s_k; x_k))
    \\
                                      & \geq \eta_1 (1-\theta_1) \hxikcp
    \\
                                      & \geq \tfrac{1}{2}\eta_1 (1-\theta_1) \nu_k^{-1} \|\hskcp\|^2
    \\
                                      & \geq \tfrac{1}{2}\eta_1 (1-\theta_1) \nu_k \epsilon^2
    \\
                                      & \geq \tfrac{1}{2}\eta_1 (1-\theta_1) \nu_{\min} \epsilon^2.
  \end{align*}
  The rest of the proof is classic and identical to, e.g., \citep[Lemma~\(4.3\)]{aravkin-baraldi-orban-2024}.
  \qed
\end{proof}

It is remarkable that the bound in \Cref{lem:inf-succ} is identical to that of the standard R2N, which is more apparent when comparing with \citep[Lemma~\(4.3\)]{aravkin-baraldi-orban-2024} than with \citep[Theorem~\(6.4\)]{diouane-habiboullah-orban-2024}.
The extra factor \(\tfrac{1}{2}\) in the denominator of our bound on \(|\mathcal{S}(\epsilon)|\) is due to the fact that we use \(\nu_k^{-1} \|\hskcp\|\) as stationarity measure instead of \(\nu_k^{-1/2} \hxikcp^{1/2}\), as in \citep{aravkin-baraldi-orban-2024}.

Finally, we recover a worst-case complexity bound of the same order as in the analysis with exact proximal operator evaluations.
The proof is identical to that of, e.g., \citep[Theorem~\(4.5\)]{aravkin-baraldi-orban-2024}, and is omitted.

\begin{theorem}%
  \label{theorem:total-iterations}
  Let \Cref{ass:lipschitz-gradient,ass:hessian-bound,ass:psi-accuracy,ass:cauchy-decrease,ass:fh-lower-bound,ass:bound-inexact-f-nf} be satisfied.
  Then,
  \[
    |\mathcal{S}(\epsilon)| + |\mathcal{U}(\epsilon)| = \left(1 + |\log_{\gamma_1}(\gamma_3)|\right) \omega_s \epsilon^{-2}  + \log_{\gamma_1}(\sigma_{\max}/\sigma_0) = O(\epsilon^{-2}),
  \]
  where \(\omega_s\) is defined in \Cref{lem:inf-succ}.
\end{theorem}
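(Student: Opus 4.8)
The plan is to combine the bound on the number of successful iterations from \Cref{lem:inf-succ} with a counting bound on the number of unsuccessful iterations, the latter obtained by tracking the regularization parameter through the update rule of \Cref{alg:ir2n}. First I would dispose of the degenerate case: if \Cref{alg:ir2n} produces only finitely many successful iterations, \Cref{lem:finite-succ} guarantees that \(x_k = x_\star\) is stationary for all large \(k\), so \(\nu_k^{-1}\|\hskcp\|\) eventually vanishes and \(k_\epsilon\) is well defined; otherwise I would invoke \Cref{lem:inf-succ}, which yields \(|\mathcal{S}(\epsilon)| \le \omega_s \epsilon^{-2}\) directly.

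The crux is bounding \(|\mathcal{U}(\epsilon)|\). I would take logarithms in base \(\gamma_1\) of the regularization parameter and telescope the increments \(\log_{\gamma_1}\sigma_{k+1} - \log_{\gamma_1}\sigma_k\) over the iterations \(k < k_\epsilon\). From the update rule, an unsuccessful iteration (\(\widehat{\rho}_k < \widehat{\eta}_1\)) forces \(\sigma_{k+1} \ge \gamma_1 \sigma_k\), contributing at least \(1\) to each such increment, while any successful iteration satisfies \(\sigma_{k+1} \ge \gamma_3 \sigma_k\) with \(\gamma_3 \le 1 < \gamma_1\), so each successful increment is at least \(\log_{\gamma_1}\gamma_3 = -|\log_{\gamma_1}\gamma_3|\). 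Summing over \(k < k_\epsilon\) gives
\[
  \log_{\gamma_1}(\sigma_{k_\epsilon}/\sigma_0) \ge |\mathcal{U}(\epsilon)| - |\log_{\gamma_1}\gamma_3|\,|\mathcal{S}(\epsilon)|.
\]

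Next I would use the uniform upper bound \(\sigma_{k_\epsilon} \le \sigma_{\max}\), which follows from \Cref{lem:sigma-succ} (once \(\sigma_k\) passes the very-successful threshold it can grow by at most the factor \(\gamma_2\), and \(\sigma_0\) controls the start). Rearranging yields
\[
  |\mathcal{U}(\epsilon)| \le |\log_{\gamma_1}\gamma_3|\,|\mathcal{S}(\epsilon)| + \log_{\gamma_1}(\sigma_{\max}/\sigma_0),
\]
and adding \(|\mathcal{S}(\epsilon)|\) to both sides, then substituting \(|\mathcal{S}(\epsilon)| \le \omega_s \epsilon^{-2}\), produces exactly the claimed bound \(\bigl(1 + |\log_{\gamma_1}(\gamma_3)|\bigr)\omega_s\epsilon^{-2} + \log_{\gamma_1}(\sigma_{\max}/\sigma_0) = O(\epsilon^{-2})\).

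The argument is essentially bookkeeping, so there is no deep obstacle; the only point requiring care is the sign accounting in the telescoped sum—one must ensure that successful iterations, which may \emph{decrease} \(\sigma\), cannot erode the increase forced by unsuccessful ones by more than \(|\log_{\gamma_1}\gamma_3|\) per successful iteration, and that the enforced lower bound \(\sigma_k \ge \sigma_{\min} > 0\) keeps every logarithm finite. Because these facts are already secured by the update rule and by \Cref{lem:sigma-succ}, the complexity bound transfers verbatim from the exact-evaluation analysis, confirming that inexactness does not degrade the \(O(\epsilon^{-2})\) order.
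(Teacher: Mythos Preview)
Your proposal is correct and matches the argument the paper defers to: the paper omits the proof entirely, stating it is identical to \citep[Theorem~4.5]{aravkin-baraldi-orban-2024}, which is precisely the telescoping-logarithm bookkeeping you outline. Your handling of the finitely-many-successful-iterations case via \Cref{lem:finite-succ} and your observation that the reset step \(\sigma_{k+1} \gets \max(\sigma_{k+1},\sigma_{\min})\) can only help the lower bounds are both appropriate refinements.
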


\Cref{theorem:total-iterations} shows that iR2N brings the measure \(\nu_k^{-1} \|\hskcp\|\) below \(\epsilon\) in \(O(\epsilon^{-2})\) iterations.
That measure is not a stationarity measure because it includes the inexactness on \(\hskcp\).
By \Cref{ass:inexact-decrease}, there exists an exact Cauchy step \(\skecp\) such that
\begin{equation}
  \label{eq:link-inexact-measure}
  \nu_k^{-1} \|\skecp\| \leq \kappa_s^{-1} \nu_k^{-1} \|\hskecp\| < \kappa_s^{-1} \epsilon.
\end{equation}
Thus, if  \(\nu_k^{-1} \|\hskecp\|\) is small, \(\nu_k^{-1} \|\skecp\|\) is comparably small.
The next result shows that when the latter occurs, we have identified a near stationary point, and marks the impact of \(\kappa_s\) on the analysis.

\begin{theorem}%
  \label{theorem:eps-stationarity}
  Let \Cref{ass:bound-inexact-f-nf,ass:inexact-decrease} be satisfied.
  Let \(\epsilon > 0\) and assume \(\nu_k^{-1} \|\hskcp\| < \epsilon\).
  There exists \(\skcp \in \prox{\nu \psi(\cdot; x_k)} (-\nu_k \hnf(x_k))\) that satisfies \Cref{ass:inexact-decrease} such that \(\|\skcp\| < \kappa_s^{-1} \nu_{\max} \epsilon\), and \(u_k \in \nf(x_k) + \partial \psi(\skcp; x_k)\) such that
  \begin{equation}%
    \label{eq:uk-norm}
    \|u_k\| < \left( \kappa_{\nabla} \theta_2 \nu_{\max} + \kappa_s^{-1} \right) \,  \epsilon.
  \end{equation}
\end{theorem}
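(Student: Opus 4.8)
The plan is to extract both objects — a minimum-norm exact Cauchy step $\skcp$ and a subgradient $u_k$ — from the first-order optimality condition for the proximal subproblem, and then to bound their norms using the two hypotheses together with the step-size control built into \Cref{alg:ir2n}. First I would take $\skcp$ to be the minimum-norm element of $\prox{\nu\psi(\cdot; x_k)}(-\nu_k\hnf(x_k))$, which is exactly the element that realizes the minimum appearing in \Cref{ass:inexact-decrease}; that assumption then reads $\|\hskcp\| \geq \kappa_s\|\skcp\|$, i.e.\ $\|\skcp\| \leq \kappa_s^{-1}\|\hskcp\|$, so this $\skcp$ satisfies \Cref{ass:inexact-decrease}. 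Combining with the hypothesis $\nu_k^{-1}\|\hskcp\| < \epsilon$ and the upper bound $\nu_k \leq \nu_{\max}$ from~\eqref{eq:nuk-bounds} gives $\|\skcp\| < \kappa_s^{-1}\nu_{\max}\epsilon$, the first claimed estimate, and the same chain records the sharper fact $\nu_k^{-1}\|\skcp\| \leq \kappa_s^{-1}\nu_k^{-1}\|\hskcp\| < \kappa_s^{-1}\epsilon$, which I reuse below.

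Next, since $\skcp$ minimizes $\mcp(\cdot; x_k, \nu_k^{-1})$, equivalently the map $s \mapsto \hnf(x_k)^T s + \tfrac12\nu_k^{-1}\|s\|^2 + \psi(s; x_k)$, I would apply Fermat's rule together with the sum rule for the limiting subdifferential \citep[Theorem~10.1]{rockafellar-wets-2009}: the smooth part has gradient $\hnf(x_k) + \nu_k^{-1}\skcp$, so $0 \in \hnf(x_k) + \nu_k^{-1}\skcp + \partial\psi(\skcp; x_k)$, that is, $-\hnf(x_k) - \nu_k^{-1}\skcp \in \partial\psi(\skcp; x_k)$. I then set $u_k \defeq \nf(x_k) - \hnf(x_k) - \nu_k^{-1}\skcp$, which lies in $\nf(x_k) + \partial\psi(\skcp; x_k)$ by construction.

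It remains to bound $\|u_k\|$ via the triangle inequality, $\|u_k\| \leq \|\nf(x_k) - \hnf(x_k)\| + \nu_k^{-1}\|\skcp\|$. The second term is already controlled by $\kappa_s^{-1}\epsilon$ from the first paragraph, while the gradient-error bound~\eqref{eq:bound-inexact-nf} gives $\|\nf(x_k) - \hnf(x_k)\| \leq \knf\|\hsk\|$ in terms of the full step $\hsk$ rather than the Cauchy step. The one link that requires care — and the only real subtlety of the argument — is that line~\ref{stp:theta2} of \Cref{alg:ir2n} guarantees $\|\hsk\| \leq \theta_2\|\hskcp\|$ in every case (either the test holds, or $\hsk$ is reset to $\hskcp$ and $\theta_2 > 1$ makes it trivial); this is what lets me route the gradient error through the full step back to the Cauchy step. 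Consequently $\|\nf(x_k) - \hnf(x_k)\| \leq \knf\theta_2\|\hskcp\| < \knf\theta_2\nu_{\max}\epsilon$, and adding the two pieces yields the claimed bound $\|u_k\| < (\knf\theta_2\nu_{\max} + \kappa_s^{-1})\epsilon$. I expect no genuine obstacle beyond this bookkeeping: everything follows once the optimality condition is written down and the gradient error is traced through the $\theta_2$ safeguard rather than directly through $\hskcp$ or $\skcp$.
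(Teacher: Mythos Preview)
Your proposal is correct and follows essentially the same approach as the paper: write the first-order optimality condition for $\skcp$, define $u_k$ as the negative of the gradient error plus $\nu_k^{-1}\skcp$, and bound each piece using \Cref{ass:inexact-decrease}, \Cref{ass:bound-inexact-f-nf}, the $\theta_2$ safeguard at line~\ref{stp:theta2}, and~\eqref{eq:nuk-bounds}. Your explicit choice of $\skcp$ as the minimum-norm element of the prox set is a nice clarification of which exact step realizes the inequality in \Cref{ass:inexact-decrease}.
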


\begin{proof}
  By definition, \(\skcp\) is an exact minimizer of~\eqref{eq:model-first-order}, thus
  \begin{align}
    0 & \in \hnf(x_k) + \nu_k^{-1}\skcp + \partial \psi(\skcp; x_k) \nonumber \\
      & = \nf(x_k) + g_k + \nu_k^{-1}\skcp + \partial \psi(\skcp; x_k),
    \label{eq:skcp-subdifferential}
  \end{align}
  where \(g_k \defeq \hnf(x_k) - \nabla f(x_k)\) and \(\|g_k\| \leq \kappa_\nabla \|\hsk\| \leq \kappa_\nabla \theta_2 \|\hskcp\|\) from \Cref{ass:bound-inexact-f-nf} and line~\ref{stp:theta2} of \Cref{alg:ir2n}.
  By~\eqref{eq:nuk-bounds} and \(\nu_k^{-1} \|\hskcp\| < \epsilon\),
  \(
  \|\hskcp\| \leq \nu_k \epsilon < \nu_{\max} \epsilon.
  \)
  Thus, \(\|g_k\| < \kappa_{\nabla} \theta_2 \nu_{\max} \epsilon\).

  On the other hand, \Cref{ass:inexact-decrease} gives
  \begin{equation*}%
    \|\nu_k^{-1} \skcp\| \leq \kappa_s^{-1} \nu_k^{-1} \|\hskcp\| < \kappa_s^{-1} \epsilon.
  \end{equation*}
  Now,~\eqref{eq:skcp-subdifferential} implies that
  \[
    u_k \defeq -(g_k + \nu_k^{-1}\skcp) \in \nf(x_k) + \partial \psi(\skcp; x_k).
  \]
  Because \(\|u_k\| \leq \|g_k\| + \|\nu_k^{-1} \skcp\|\),~\eqref{eq:uk-norm} holds.
  Finally, the same reasoning as above shows that \(\|\skcp\|\) is bounded as announced.
  \qed
\end{proof}

The following results directly from \Cref{theorem:total-iterations} and mirrors \citep[Lemma~\(3\)]{leconte-orban-2023}.

\begin{lemma}%
  \label{lem:s-to-zero}
  Under the assumptions of \Cref{theorem:total-iterations} and \Cref{ass:inexact-decrease}, there exists an infinite index set \(N \subseteq \N\) and \(\{\skcp\}\) where \(\skcp \in \prox{\nu \psi(\cdot; x_k)} (-\nu_k \hnf(x_k))\) for all \(k\) such that
  \begin{enumerate}
    \item%
      \label{itm:scp-to-0}%
      \(\{\hskcp\}_N \to 0\) and \(\{\skcp\}_N \to 0\),
    \item%
      \label{itm:s-to-0}%
      \(\{\hsk\}_N \to 0\)
    \item%
      \label{itm:sub-opt}%
      there exists \(u_k \in \nabla f(x_k) + \partial \psi(\skcp; x_k)\) such that \(\{u_k\}_N \to 0\).
  \end{enumerate}
\end{lemma}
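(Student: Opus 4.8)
The plan is to extract from the complexity analysis an infinite index set along which the stationarity measure vanishes, and then to obtain the three conclusions from \Cref{theorem:eps-stationarity} together with the step-length control built into \Cref{alg:ir2n}. The natural candidate for the index set is $N \defeq \mathcal{S}$, the (infinite, by hypothesis) set of successful iterations. The per-iteration estimate used in the proof of \Cref{lem:inf-succ} shows that each successful iteration satisfies $(f+h)(x_k) - (f+h)(x_{k+1}) \geq \tfrac{1}{2}\eta_1(1-\theta_1)\nu_k^{-1}\|\hskcp\|^2$, while at unsuccessful iterations $x_{k+1}=x_k$. Telescoping and using the lower bound of \Cref{ass:fh-lower-bound} gives $\sum_{k \in \mathcal{S}} \nu_k^{-1}\|\hskcp\|^2 \leq (f+h)(x_0) - (f+h)_{\mathrm{low}} < \infty$, so the summand tends to zero along $\mathcal{S}$; this is the precise sense in which the conclusion ``results directly from \Cref{theorem:total-iterations}''.

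The first two conclusions then follow from elementary bookkeeping with~\eqref{eq:nuk-bounds}. From $\nu_k \geq \nu_{\min}$ I get $\|\hskcp\|^2 \leq \nu_{\max}\,\nu_k^{-1}\|\hskcp\|^2 \to 0$ along $N$, hence $\{\hskcp\}_N \to 0$, which also yields $\{\nu_k^{-1}\|\hskcp\|\}_N \leq \nu_{\min}^{-1}\{\|\hskcp\|\}_N \to 0$. This settles the first half of item~\ref{itm:scp-to-0}. For item~\ref{itm:s-to-0}, the reset in line~\ref{stp:theta2} of \Cref{alg:ir2n} guarantees $\|\hsk\| \leq \theta_2 \|\hskcp\|$ at every iteration, so $\{\hsk\}_N \to 0$ as well.

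For the statements about $\skcp$ and $u_k$, I would invoke \Cref{theorem:eps-stationarity} along $N$. For $k \in N$, pick any $\epsilon_k > \nu_k^{-1}\|\hskcp\|$ with $\epsilon_k \to 0$ (for instance $\epsilon_k \defeq \nu_k^{-1}\|\hskcp\| + 1/k$); the hypothesis of the theorem holds, producing $\skcp \in \prox{\nu \psi(\cdot; x_k)}(-\nu_k \hnf(x_k))$ satisfying \Cref{ass:inexact-decrease} with $\|\skcp\| < \kappa_s^{-1}\nu_{\max}\epsilon_k$, together with $u_k \in \nf(x_k) + \partial \psi(\skcp; x_k)$ obeying $\|u_k\| < (\knf \theta_2 \nu_{\max} + \kappa_s^{-1})\epsilon_k$. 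Letting $k \to \infty$ in $N$ gives $\{\skcp\}_N \to 0$, completing item~\ref{itm:scp-to-0}, and $\{u_k\}_N \to 0$, which is item~\ref{itm:sub-opt}. Because \Cref{theorem:eps-stationarity} selects $\skcp$ and the matching $u_k$ simultaneously, no compatibility issue arises between these two conclusions.

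I expect the only genuinely delicate point to be the extraction of $N$. One must be careful that the finiteness of $k_\epsilon$ for every $\epsilon$ does not by itself force a subsequence on which the measure converges to zero—it only forces a zero infimum—so the argument must rest on the \emph{summable} decrease along $\mathcal{S}$ rather than on the bare bound $k_\epsilon = O(\epsilon^{-2})$. Using $N = \mathcal{S}$ and the series estimate above resolves this cleanly and mirrors \citep[Lemma~3]{leconte-orban-2023}; everything else is routine manipulation of bounds already in hand.
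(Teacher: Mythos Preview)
Your proof is correct and follows essentially the same route as the paper: extract an index set \(N\) on which the stationarity measure vanishes, then read off item~\ref{itm:s-to-0} from line~\ref{stp:theta2} of \Cref{alg:ir2n} and item~\ref{itm:sub-opt} (together with the \(\skcp\) half of item~\ref{itm:scp-to-0}) from \Cref{theorem:eps-stationarity}; the paper's own proof simply cites \Cref{theorem:total-iterations}, \eqref{eq:nuk-bounds} and~\eqref{eq:link-inexact-measure} where you instead spell out the summability over \(\mathcal{S}\)---an argument the paper itself records in \Cref{sec:discussion}. One cosmetic slip: the bound \(\|\hskcp\|^2 \leq \nu_{\max}\,\nu_k^{-1}\|\hskcp\|^2\) uses \(\nu_k \leq \nu_{\max}\), not \(\nu_k \geq \nu_{\min}\) as you wrote.
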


\begin{proof}
  Claim~\ref{itm:scp-to-0} follows directly from \Cref{theorem:total-iterations},~\eqref{eq:nuk-bounds} and~\eqref{eq:link-inexact-measure}.
  Claim~\ref{itm:s-to-0} follows from Line~\ref{stp:theta2} of \Cref{alg:ir2n}.
  Claim~\ref{itm:sub-opt} results from \Cref{theorem:eps-stationarity}.
  \qed
\end{proof}

We close this section with a result stating that every limit point of the sequence \(\{x_k\}_N\) generated by \Cref{alg:ir2n} is stationary, where \(N\) is defined in \Cref{lem:s-to-zero}, under an assumption on the subdifferential of the models \(\psi(\cdot; x_k)\).

Recall that for a sequence of sets \(\{\mathcal{A}_k\}\) with \(\mathcal{A}_k \subseteq \R^n\) for all \(k \in \N\), the set \(\limsup \mathcal{A}_k\) is the set of limits of all possible convergent sequences \(\{a_k\}_N\) with \(N \subset \N\) infinite and \(a_k \in \mathcal{A}_k\) for all \(k \in N\).

\begin{theorem}%
  Under the assumptions of \Cref{theorem:total-iterations}, \Cref{ass:inexact-decrease,ass:psi-prox-bounded}, let \(N \subseteq \N\) be as in \Cref{lem:s-to-zero}.
  Assume that \(\{x_k\}_N \to \bar{x}\) and that
  \begin{equation}%
    \label{eq:limsup-subdifferential}
    \limsup_{k \in N} \partial \psi(\skcp; x_k) \subseteq \partial \psi(0; \bar{x}).
  \end{equation}
  Then \(\bar{x}\) is stationary for~\eqref{eq:problem-adressed}.
\end{theorem}

\begin{proof}
  By our assumptions, \Cref{lem:s-to-zero}, continuity of \(\nabla f\) and \Cref{ass:psi-prox-bounded},
  \[
    0 \in \nabla f(\bar{x}) + \limsup_{k \in N} \partial \psi(\skcp; x_k) \subseteq \nabla f(\bar{x}) + \partial \psi(0; \bar{x}) \subseteq \nabla f(\bar{x}) + \partial h(\bar{x}).
  \]
  Thus, \(\bar{x}\) is stationary for~\eqref{eq:problem-adressed}.
  \qed
\end{proof}

As \citet{leconte-orban-2023} explain,~\eqref{eq:limsup-subdifferential} holds in several relevant cases, e.g.,
\begin{enumerate}
  \item each \(\psi(\cdot; x_k)\) and \(\psi(\cdot; \bar{x})\) are proper, lsc and convex with \(\psi(\cdot; x_k) \to \psi(\cdot; \bar{x})\) in the epigraphical sense, and \(0 \in \mathrm{dom} \ \psi(\cdot; \bar{x})\);
  \item \(\psi(s; x) \defeq h(x + s)\) and \(h(x_k + \skcp) \to h(\bar{x})\) as would occur, in particular but not exclusively, when \(h\) is continuous.
\end{enumerate}

\section{Evaluation of inexact proximal operators}%
\label{sec:implementation}

In this section, we discuss the practical implementation of \Cref{alg:ir2n} with focus on computing an approximate solution of~\eqref{eq:proximal-problem} that satisfies \Cref{ass:inexact-decrease}.
Our approach is simple: assume that an upper bound \(M_k > 0\) on \(\|\skcp\|\) can be determined based on properties of \(\psi(\cdot; x_k)\).
Assume also that a descent procedure is applied to~\eqref{eq:proximal-problem} starting from \(s = 0\) that generates iterates \(\widehat{s}_j\), \(j \geq 0\).
Then, stopping the procedure as soon as \(\|\widehat{s}_j\| \geq \kappa_s M_k\) ensures that \Cref{ass:inexact-decrease} holds.

As becomes apparent in the examples below, \(\kappa_s\) is chosen a priori, but its value may influence performance.
Indeed, \(\kappa_s \approx 0\) suggests that little effort will be expended in computing a step, at the risk of computing a low-quality step that may result in more iterations.
Conversely, \(\kappa_s \approx 1\) suggests that more effort will be invested in computing a better-quality step, at the risk of spending more time than necessary in the proximal operator evaluations.
Thus, a (possibly application-dependent) balance must be found in the choice of \(\kappa_s\).

We consider three regularizers whose proximal operators~\eqref{eq:def-prox} are not known analytically and must be computed inexactly:
\begin{align}%
  \label{eq:inexact_prox-lp}
  h(x) & = \ell_p(x)      = \|x\|_p                          & (1 \leq p < \infty), \\
  \label{eq:inexact_prox-tvp}
  h(x) & = \text{TV}_p(x) = (\sum_i |x_i - x_{i-1}|^p)^{1/p} & (1 \leq p < \infty), \\
  \label{eq:inexact_prox-chi}
  h(x) & = \chi_{p,r}(x)  =
  \begin{cases}
    0      & \text{if } \|x\|_p^p \leq r \\
    \infty & \text{otherwise}
  \end{cases}
       & (0 < p < 1),
\end{align}
where \(\text{TV}_p\) is the one-dimensional total-variation operator, and \(\chi_{p,r}\) is the indicator of the \(\ell_p\)-pseudo norm ``ball'' of radius \(r^{1/p}\) for \(r > 0\).

The next lemmas derive bounds on the norm of solutions to the proximal problems associated with those regularizers.

\begin{lemma}%
  \label{lem:bounds-on-lp}
  Let \(h\) be given by~\eqref{eq:inexact_prox-lp} and \(\psi(s; x_k) \defeq h(x_k + s)\) with \(s \in \R^n\).
  The unique solution \(\skcp\) of~\eqref{eq:proximal-problem} is such that
  \begin{equation}%
    \label{eq:bounds-on-lp}
    \|\skcp\| \leq
    \begin{cases}
      \nu_k (\|\hnf(x_k)\| + n^{1/p - 1/2}) & (1 \leq p < 2) \\
      \nu_k (\|\hnf(x_k)\| + 1)             & (p \geq 2).
    \end{cases}
  \end{equation}
\end{lemma}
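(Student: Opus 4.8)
The plan is to exploit the first-order optimality condition for the strictly convex subproblem~\eqref{eq:proximal-problem} and then control the subgradient of the \(\ell_p\) norm through norm-equivalence inequalities.

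First I would establish uniqueness. For \(1 \le p < \infty\) the map \(\ell_p\) is convex, so the objective in~\eqref{eq:proximal-problem} is the sum of a linear term, the strictly convex quadratic \(\tfrac12 \nu_k^{-1} \|s\|^2\), and the convex term \(\|x_k + s\|_p\); it is therefore strictly convex, and the nonempty compact minimizing set guaranteed by \Cref{prop:skcp} reduces to the single point \(\skcp\). Next I would write the stationarity condition \(0 \in \hnf(x_k) + \nu_k^{-1}\skcp + \partial \psi(\skcp; x_k)\). Since \(\psi(s; x_k) \defeq \|x_k + s\|_p\), there is \(v \in \partial \|\cdot\|_p(x_k + \skcp)\) with \(\skcp = -\nu_k(\hnf(x_k) + v)\), so that the triangle inequality gives \(\|\skcp\| \le \nu_k(\|\hnf(x_k)\| + \|v\|)\). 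The whole problem thus reduces to bounding \(\|v\|\) in the Euclidean norm.

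The key fact I would invoke is the standard characterization of the subdifferential of a norm: every \(v \in \partial \|y\|_p\) satisfies \(\|v\|_q \le 1\), where \(q\) is the conjugate exponent defined by \(1/p + 1/q = 1\). This holds at every \(y\), including \(y = 0\), where \(\partial \|\cdot\|_p(0)\) is exactly the dual unit ball. It then remains to pass from the \(\ell_q\) bound to a Euclidean bound via the elementary inequalities relating \(\|\cdot\|_q\) and \(\|\cdot\|_2\). When \(1 \le p < 2\) we have \(q > 2\), so \(\|v\|_2 \le n^{1/2 - 1/q} \|v\|_q \le n^{1/p - 1/2}\), using \(1/q = 1 - 1/p\); when \(p \ge 2\) we have \(q \le 2\), so \(\|v\|_2 \le \|v\|_q \le 1\). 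Substituting these two cases into \(\|\skcp\| \le \nu_k(\|\hnf(x_k)\| + \|v\|)\) yields exactly~\eqref{eq:bounds-on-lp}.

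The only point requiring care is the bookkeeping of the conjugate exponent and the direction of the norm-equivalence inequalities: one must confirm that \(q \ge 2\) corresponds to \(p < 2\), and that the dimensional factor \(n^{1/2 - 1/q}\) is placed on the side that simplifies to \(n^{1/p - 1/2}\). Everything else is a routine application of convexity and duality, so I do not anticipate any genuine obstacle beyond this exponent arithmetic.
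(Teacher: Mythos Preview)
Your proposal is correct and follows essentially the same route as the paper: derive the optimality condition, extract the subgradient \(v\) with \(\|v\|_q \le 1\), and convert to a Euclidean bound via the standard \(\ell_q\)--\(\ell_2\) norm equivalences, splitting on whether \(q>2\) or \(q\le 2\). The paper additionally remarks that both bounds are attained by specific subgradients, but this tightness observation is not needed for the lemma as stated.
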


\begin{proof}
  Since \(\psi(\cdot; x_k)\) is convex,~\eqref{eq:proximal-problem} is strongly convex and, therefore, has a unique solution \(\skcp\).
  The necessary optimality conditions read
  \[
    \hnf(x_k) + \nu_k^{-1}\skcp + u_k = 0, \qquad u_k \in \partial \psi(\skcp;x_k).
  \]
  Here,
  \(
  \partial \psi(\skcp; x_k) = \{ u \in \R^n \mid \|u\|_q \leq 1 \text{ and } u^T (\skcp + x_k) = \|\skcp + x\|_p \},
  \)
  where \(q\) is such that \(1/p + 1/q = 1\).
  %
  By equivalence of norms,
  \[
    \|u_k\| \leq n^{1/2 - 1/q} \, \|u_k\|_q \leq n^{1/2 - 1/q} = n^{1/p - 1/2}.
  \]
  When \(1 \leq p \leq 2\), the latter bound is attained for \(u_k \defeq (n^{-1/q}, n^{-1/q}, \dots, n^{-1/q})\) with \(\|u_k\|_q = 1\).
  When \(p > 2\), the bound simplifies to \(\|u_k\| \leq 1\), which is attained for \(u_k \defeq (1, 0, \dots, 0)\).
  Thus, \(\|\skcp\| = \nu_k \|\hnf(x_k) + u_k\| \leq \nu_k (\|\hnf(x_k)\| + \|u_k\|)\), which yields~\eqref{eq:bounds-on-lp}.
  \qed
\end{proof}

The next result helps bound solutions of~\eqref{eq:proximal-problem} when \(h\) is given by~\eqref{eq:inexact_prox-tvp}, but is more general, which is why it is stated separately.

\begin{lemma}%
  \label{lem:bound-on-matrix-regularizer}
  Let \(A \in \R^{m \times n}\), \(h(x) \defeq \|Ax\|_\bullet\) where \(\|\cdot\|_\bullet\) is a norm on \(\R^m\), and \(\psi(s; x_k) \defeq h(x_k + s)\).
  The unique solution \(\skcp\) of~\eqref{eq:proximal-problem} satisfies
  \begin{equation}%
    \label{eq:bound-on-matrix-regularizer}
    \|\skcp\| \leq \nu_k \left(\|\hnf(x_k)\| + \|A\| \, \|u_k\| \right),
  \end{equation}
  where \(u_k \in \partial \|A(x_k + \skcp)\|_\bullet = \partial h(x_k + \skcp)\).
\end{lemma}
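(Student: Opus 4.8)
The plan is to mirror the proof of \Cref{lem:bounds-on-lp}, replacing the explicit dual-norm characterization of \(\partial \psi\) by a subdifferential chain rule for the composition of a norm with an affine map. First I would observe that \(\psi(\cdot; x_k) = \|A(x_k + \cdot)\|_\bullet\) is convex, being the composition of the convex norm \(\|\cdot\|_\bullet\) with the affine map \(s \mapsto A(x_k + s)\). Hence the objective of~\eqref{eq:proximal-problem} is strongly convex thanks to the quadratic term \(\tfrac{1}{2}\nu_k^{-1}\|s\|^2\), which guarantees existence and uniqueness of the minimizer \(\skcp\).

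Next I would write the first-order optimality condition for~\eqref{eq:proximal-problem},
\[
  0 \in \hnf(x_k) + \nu_k^{-1} \skcp + \partial \psi(\skcp; x_k),
\]
and apply the chain rule for the subdifferential of a convex function precomposed with an affine map. Because \(\|\cdot\|_\bullet\) is finite everywhere on \(\R^m\), its domain is all of \(\R^m\), so no further constraint qualification is needed and I obtain \(\partial \psi(\skcp; x_k) = A^T \partial \|A(x_k + \skcp)\|_\bullet\). Consequently there exists \(u_k \in \partial \|A(x_k + \skcp)\|_\bullet\) such that \(0 = \hnf(x_k) + \nu_k^{-1} \skcp + A^T u_k\), that is, \(\skcp = -\nu_k(\hnf(x_k) + A^T u_k)\).

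Finally I would bound the Euclidean norm of \(\skcp\) by the triangle inequality together with the operator-norm estimate. Since \(\|A^T u_k\| \le \|A^T\|\,\|u_k\| = \|A\|\,\|u_k\|\) (the spectral norm being invariant under transposition), it follows that
\[
  \|\skcp\| = \nu_k \|\hnf(x_k) + A^T u_k\| \le \nu_k\bigl(\|\hnf(x_k)\| + \|A\|\,\|u_k\|\bigr),
\]
which is exactly~\eqref{eq:bound-on-matrix-regularizer}. The only nonroutine point is the justification of the chain rule for \(\partial \psi\); this is standard in convex analysis and holds unconditionally here precisely because the regularizing norm is real-valued on all of \(\R^m\), so the affine-composition qualification is automatic. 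Everything else is the same elementary manipulation used in \Cref{lem:bounds-on-lp}.
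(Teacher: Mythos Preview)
Your proposal is correct and follows essentially the same route as the paper: strong convexity for uniqueness, the affine-composition chain rule \(\partial\psi(\skcp;x_k)=A^{T}\partial\|A(x_k+\skcp)\|_\bullet\) (the paper cites \citep[Theorem~23.9]{rockafellar-1970} for this), the first-order condition yielding \(\skcp=-\nu_k(\hnf(x_k)+A^{T}u_k)\), and then the triangle inequality with \(\|A^{T}\|=\|A\|\).
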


\begin{proof}
  Here again, \(\skcp\) is unique by strong convexity of~\eqref{eq:proximal-problem}.
  For \(\eta(y) \defeq \|y\|_{\bullet}\),
  \[
    \partial \eta(y) = \{ u \in \R^m \mid \|u\|_\star \leq 1 \text{ and } u^T y = \|y\|_{\bullet} \},
  \]
  where \(\|\cdot\|_\star\) is the dual norm of \(\|\cdot\|_{\bullet}\).
  By \citep[Theorem~\(23,9\)]{rockafellar-1970}, \(\partial \psi(s; x_k) = A^T \partial \eta(A (x_k + s))\).
  Thus, the first-order optimality conditions of~\eqref{eq:proximal-problem} imply
  \[
    0 \in \hnf(x_k) + \nu_k^{-1} \skcp + A^T u_k,
  \]
  where \(u_k \in \partial \eta(A (x_k + \skcp))\).
  We extract \(\skcp = -\nu_k (\hnf(x_k) + A^T u_k)\), and \(\|\skcp\| \leq \nu_k (\|\hnf(x_k)\| + \|A^T\| \, \|u_k\|)\), which is~\eqref{eq:bound-on-matrix-regularizer} since \(\|A\| = \|A^T\|\).
  \qed
\end{proof}

\Cref{lem:bound-on-matrix-regularizer} does not state a bound on \(\|u_k\|\) as one would depend on \(\|\cdot\|_{\bullet}\) and the bound \(\|u_k\|_\star \leq 1\).
The next corollary applies \Cref{lem:bound-on-matrix-regularizer} to~\eqref{eq:inexact_prox-tvp}.

\begin{corollary}%
  Let \(h\) be as in~\eqref{eq:inexact_prox-tvp} and \(\psi(s; x_k) \defeq h(x_k + s)\).
  The unique solution \(\skcp\) of~\eqref{eq:proximal-problem} satisfies
  \begin{equation}
    \label{eq:bound-on-tvp}
    \|\skcp\| \leq
    \begin{cases}
      \nu_k \left(\|\hnf(x_k)\| + 2\sin \left(\tfrac{\pi(n-1)}{2n}\right) n^{1/p - 1/2}\right) & (1 \leq p < 2) \\
      \nu_k \left(\|\hnf(x_k)\| + 2\sin \left(\tfrac{\pi(n-1)}{2n}\right)\right)               & (p \geq 2).
    \end{cases}
  \end{equation}
\end{corollary}

\begin{proof}
  Apply \Cref{lem:bound-on-matrix-regularizer} with \(\|\cdot\|_{\bullet} = \|\cdot\|_p\) and
  \[
    A \defeq
    \begin{bmatrix}
      -1 & 1      &        &   \\
         & \ddots & \ddots &   \\
         &        & -1     & 1
    \end{bmatrix}
    \in \R^{(n-1) \times n}.
  \]
  Note that \(A^T A\) is the centered finite-difference operator for second derivatives, which is symmetric, tridiagonal and Toeplitz.
  Its eigenvalues are thus known in closed form, hence the value of \(\|A\|\) \citep[p.~\(54\)]{smith-1985}.
  Finally, \(\|u_k\|\) can be bounded as in the proof of \Cref{lem:bounds-on-lp}.
\end{proof}

The final lemma derives a bound on the solution of the proximal problem associated to the indicator function in~\eqref{eq:inexact_prox-chi}.

\begin{lemma}%
  \label{lem:bound-on-chi}
  Let \(h\) be as in~\eqref{eq:inexact_prox-chi} and \(\psi(s; x_k) \defeq h(x_k + s)\).
  Any solution \(\skcp\) of~\eqref{eq:proximal-problem} satisfies
  \begin{equation}%
    \label{eq:bound-on-chi}
    \|\skcp\| \leq r^{1/p} + \|x_k\|.
  \end{equation}
\end{lemma}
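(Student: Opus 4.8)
The plan is to exploit that $\psi(\cdot; x_k)$ takes only the values $0$ and $+\infty$, so that any minimizer of~\eqref{eq:proximal-problem} must render $\psi$ finite, i.e., must be feasible for the $\ell_p$-pseudo norm ball; the Euclidean norm of such a feasible point is then controlled by comparing the $\ell_2$ and $\ell_p$ quasi-norms.

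First I would observe that the set $C \defeq \{y \in \R^n : \|y\|_p^p \leq r\}$ is nonempty, since it contains the origin (because $r > 0$), so the objective of~\eqref{eq:proximal-problem} is finite at $s = -x_k$, which sends $x_k + s$ to the origin. Hence any minimizer $\skcp$ must keep $\psi(\skcp; x_k) = h(x_k + \skcp)$ finite, which forces $\|x_k + \skcp\|_p^p \leq r$, equivalently $\|x_k + \skcp\|_p \leq r^{1/p}$.

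Next, since $0 < p < 1$, the $\ell_p$ quasi-norms are monotonically nonincreasing in the exponent, so $\|y\| \leq \|y\|_p$ for every $y \in \R^n$. Applying this with $y = x_k + \skcp$ gives $\|x_k + \skcp\| \leq r^{1/p}$. The triangle inequality for the Euclidean norm then yields
\[
  \|\skcp\| = \|(x_k + \skcp) - x_k\| \leq \|x_k + \skcp\| + \|x_k\| \leq r^{1/p} + \|x_k\|,
\]
which is~\eqref{eq:bound-on-chi}.

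The argument is elementary and presents no real obstacle; the only point requiring care is the direction of the quasi-norm inequality, namely that for $0 < p < 1$ one has $\|\cdot\| \leq \|\cdot\|_p$ (and not the reverse), which is exactly what lets membership in the $\ell_p$ ball translate into a bound on the Euclidean norm. Note also that, unlike the earlier lemmas, uniqueness of $\skcp$ is not asserted here, since $h$ is nonconvex and~\eqref{eq:proximal-problem} need not be strongly convex; accordingly the bound is stated for an arbitrary minimizer.
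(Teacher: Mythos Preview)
Your proposal is correct and follows essentially the same approach as the paper: feasibility of $x_k + \skcp$ in the $\ell_p$ ball combined with $\|y\|_2 \leq \|y\|_p$ for $0 < p < 1$, then the triangle inequality. The only cosmetic difference is that the paper justifies the quasi-norm comparison by first showing $\|u\|_p \geq \|u\|_1$ via subadditivity of $t \mapsto t^p$ and then using $\|u\|_1 \geq \|u\|_2$, whereas you invoke the monotonicity $\|\cdot\|_q \leq \|\cdot\|_p$ for $p \leq q$ directly.
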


\begin{proof}
  Because \(0 < p < 1\), \(t \mapsto t^p\) is concave for \(t \geq 0\), and thus subadditive, i.e., \((a + b)^p \leq a^p + b^p\) for any \(a\), \(b \geq 0\).
  Let \(u \in \R^n\).
  By recurrence on \(n\),
  \(
  \|u\|_p^p = \sum_{i=1}^n |u_i|^p \geq ( \sum_{i=1}^n |u_i| )^p,
  \)
  which states that \(\|u\|_p \geq \|u\|_1\).
  This implies that the unit ``ball'' in \(\ell_p\)-pseudo-norm is a subset of the unit \(\ell_1\)-norm ball.
  In turn, the latter is a subset of the unit \(\ell_2\)-norm ball.
  A scaling argument shows that the same holds with balls of radius \(r > 0\).
  Therefore, because \(\|x_k + \skcp\|_p \leq r^{1/p}\), we have \(\|x_k + \skcp\| \leq r^{1/p}\).
  The triangle inequality yields
  \(
  \|\skcp\| \leq \|x_k + \skcp\| + \|x_k\| \leq r^{1/p} + \|x_k\|.
  \)
  \qed
\end{proof}

In~\eqref{eq:bounds-on-lp},~\eqref{eq:bound-on-tvp}, and~\eqref{eq:bound-on-chi}, the bound on \(\|\skcp\|\) depends only on known quantities at iteration \(k\).
Thus, we can enforce \Cref{ass:inexact-decrease} by stopping the inexact proximal procedure as soon as \(\|\hskcpj\|\) exceeds a fixed fraction of said bound.

\section{Further examples}

The examples of \Cref{sec:implementation} are representative but not exhaustive.
To emphasize the breadth of our approach, we now turn to regularizers that appear in recent exact-penalty \citep{diouane-gollier-orban-2024} and trust-region \citep{aravkin-baraldi-orban-2022,aravkin-baraldi-orban-2024} methods.
In each case, the results of \Cref{sec:implementation}, or a minor variation thereof, provide the bound needed to enforce \Cref{ass:inexact-decrease}.

\subsection{\(\ell_2\)-norm penalty of a linear form}

\citet{diouane-gollier-orban-2024} devise an exact penalty method for equality-constrained smooth optimization in which \(\psi(s; x_k) = \tau_k \|A_k s + b_k\|\), where \(\tau_k > 0\) is a penalty parameter, \(\|\cdot\|\) is a norm, \(A_k\) is the \(m \times n\) constraint Jacobian about \(x_k\), \(b_k \in \R^m\), and \(m \leq n\).
For any matrix \(M\), \(M^\dagger\) is its Moore-Penrose pseudo-inverse.

When \(\|\cdot\| = \|\cdot\|_2\), the proximal operator of \(\nu_k \psi(\cdot; x_k)\) is known, but requires an iterative procedure.

\begin{proposition}[{\protect \citealp[Corollary~\(5.3\)]{diouane-gollier-orban-2024}}]%
  \label{prop:more-sorensen}
  Let $A_k \in \R^{m \times n}$, $b_k \in \R^m$, $\tau_k > 0$, and \(\psi(s; x_k) \coloneq \tau_k \|A_k s + b_k\|_2\).
  For \(\nu_k > 0\), the only element in \(\prox{\nu_k \psi(\cdot; x_k)}(w)\) is
  \begin{equation*}%
    \begin{cases}%
      w - A_k^T {(A_k A_k^T)}^\dagger (A_k w + b_k) \quad           & \textup{if} \ \| {(A_k A_k^T)}^\dagger (A_k w + b_k)\|_2 \leq \nu_k \tau_k \\
                                                                    & \textup{and} \ A_k w + b_k \in \textup{Range}(A_k A_k^T)                   \\
      w - A_k^T {(A_k A_k^T + \alpha^* I)}^{-1} (A_k w + b_k) \quad & \textup{otherwise},
    \end{cases}
  \end{equation*}
  where $\alpha^*$ is the unique positive root of the strictly decreasing function
  \begin{equation*}%
    g(\alpha) = \|{(A_k A_k^T + \alpha I)}^{-1} (A_k w + b_k)\|_2^2 - \nu_k^2 \tau_k^2.
  \end{equation*}
\end{proposition}

Whether \(A_k\) has full row rank or not, \Cref{prop:more-sorensen} yields the following.

\begin{lemma}%
  \label{lem:more-sorensen}
  Let $A_k \in \R^{m \times n}$, $b_k \in \R^m$, $\tau_k > 0$, and \(\psi(s; x_k) \coloneq \tau_k \|A_k s + b_k\|_2\).
  The only solution to~\eqref{eq:def-prox} satisfies
  \[
    \|\skcp\| \leq \nu_k \|\hnf(x_k)\| + \|A_k\|_2 \, \nu_k \tau_k.
  \]
\end{lemma}

\begin{proof}
  We apply \Cref{prop:more-sorensen} with \(w \defeq -\nu_k \hnf(x_k)\).
  The first case occurs if \(\|(A_k A_k^T)^\dagger (A_k w + b_k)\|_2 \leq \nu_k \tau_k\).
  In the second case, \(g(\alpha^*) = 0\), which means that \(\|(A_k A_k^T + \alpha^* I)^{-1} (A_k w + b_k)\|_2 = \nu_k \tau_k\).
  Thus, in both cases, the triangle inequality shows that \(\skcp\) is bounded as announced.
  \qed
\end{proof}

\citet{diouane-gollier-orban-2024} explain that \(\skcp\) can be found by first checking whether the first case occurs, and, if it does not, by applying Newton's method with safeguards to find the root of \(g\).
The pseudo-inverse and inverse are only computed implicitly by solving systems of the form
\[
  \begin{bmatrix}
    I   & \phantom{-}A_k^T \\
    A_k & -\alpha I
  \end{bmatrix}
  \begin{bmatrix}
    s \\
    z
  \end{bmatrix}
  = -
  \begin{bmatrix}
    w \\
    b_k
  \end{bmatrix}
  ,
\]
for appropriate values of \(\alpha > 0\).
The Newton iterations can be stopped as soon as they generate an iterate whose norm is a fraction \(\kappa_s\) of the bound given in \Cref{lem:more-sorensen}, or an accurate solution has been found.

\subsection{Exact penalty in arbitrary norm}

\Cref{prop:more-sorensen} is a special case of \citep[Theorem~\(5.1\)]{diouane-gollier-orban-2024}, which states the following.
Let \(\psi(s; x_k) = \tau_k \|A_k s + b_k\|_\bullet\), where \(\|\cdot\|_\bullet\) is an arbitrary norm.
Then, \(\skcp = \nu_k (A_k^T y - \hnf(x_k))\), where \(y\) minimizes a certain quadratic subject to the trust-region constraint \(\|y\|_\star \leq \tau_k\), and \(\|\cdot\|_\star\) is the dual norm.
The triangle inequality yields \(\|\skcp\| \leq \nu_k (\|A_k^T y\| + \|\hnf(x_k)\|)\).
A bound on \(\|A_k^T y\|\) will depend on the norm chosen.

A common choice is \(\|\cdot\|_\bullet = \|\cdot\|_1\), in which case \(\|\cdot\|_\star = \|\cdot\|_\infty\).
Thus, we have the bound \(\|A_k^T y\| \leq \|A_k\| \, \|y\| \leq \sqrt{n} \tau_k \|A_k\|\).
A tighter bound that does not depend on \(n\) may be stated as \(\|A_k^T y\| \leq \tau_k \|A_k\|_{2,\infty}\), where
\[
  \|A_k\|_{2, \infty} \defeq \max \{ \|A_k y\|_2 \mid \|y\|_\infty \leq 1 \}.
\]

Similar bounds follow from the other common choice \(\|\cdot\|_\bullet = \|\cdot\|_\infty\), for which \(\|\cdot\|_\star = \|\cdot\|_1\).

\subsection{Trust-region indicator}

In the context of trust-region methods for~\eqref{eq:problem-adressed}, \citet{aravkin-baraldi-orban-2022,aravkin-baraldi-orban-2024} give procedures based on the solution of a nonlinear equation to obtain an element of~\eqref{eq:model-first-order} with the additional constraint \(\|s\|_{\infty} \leq \Delta\), where \(\Delta > 0\), or, equivalently, with the additional term \(\chi(s \mid \Delta \B_\infty)\) in the objective, where \(\B_\infty\) is the \(\ell_\infty\)-norm unit ball and \(\chi\) is the indicator of a set.
They do so for two choices of \(\psi\).
Our results apply directly to both regularizers, and indeed to any regularizer combined with a trust-region constraint.
Because \(\B_2 \subset \B_\infty\), \(\|\skcp\|_2 \leq \Delta\).
Thus, we may use the stopping condition \(\|\hskcp\| \geq \kappa_s \Delta\).

\section{Numerical experiments}%
\label{sec:numerical-experiments}

In this section, we present numerical experiments indicating that exploiting inexact objective values, gradients and proximal operators can reduce computational cost substantially.
We implement \Cref{alg:ir2n} in the Julia language \citep{bezanson-edelman-karpinski-shah-2017} as a modification of the R2N solver \citep{diouane-habiboullah-orban-2024} in \citep{baraldi-leconte-orban-2024}.
To the best of our knowledge, there exists no other proximal-type solver able to handle inexact objective, gradient and proximal evaluations for nonconvex \(h\) with which to perform a comparison.

The implementation of the proximal operator of~\eqref{eq:inexact_prox-lp} and~\eqref{eq:inexact_prox-tvp}, which are both convex, is available from the Julia interface \citep{allaire-orban-montoison-2024} to the \href{https://github.com/albarji/proxTV}{\texttt{proxTV}} library \citep{barbero-sra-2018}.
Both implement iterative methods.
The method for~\eqref{eq:inexact_prox-lp} computes projected quasi-Newton search directions, and performs a backtracking line search to determine the step size.
That for~\eqref{eq:inexact_prox-tvp} alternates between gradient projection into the \(\ell_p\)-norm ball and Frank-Wolfe steps.
After each update, the primal solution is reconstructed from the dual variable, and a new gradient is computed.

Our implementation of the proximal operator of~\eqref{eq:inexact_prox-chi} is based on the Iteratively Reweighted \(\ell_p\)-Ball Projection (IRBP) scheme of \citep{yang-wang-wang-2022}.
At each iteration, IRBP approximates the \(\ell_p\)-``ball'' norm via a weighted linearization of the nonconvex set around the current iterate.
This results in a convex subproblem describing a projection into a weighted \(\ell_1\)-norm ball, which can be solved efficiently \citep{condat-2016}.
A smoothing vector is maintained and adaptively updated to avoid numerical instability and improve convergence.
The nonconvex nature of \(\chi_{p, r}\) implies that there may be non-global minima or saddle points \citep{yang-wang-wang-2022}.
Therefore, the step output by \(\chi_{p, r}\) may not even induce \(\hxikcp \geq 0\).
To the best of our knowledge, there is currently no procedure that is guaranteed to determine a global minimum.
In order to mitigate the issue, we implement a multi-start strategy to increase the odds that \(\hskcp\) be a global solution.
Our strategy is not always successful, but nevertheless often results in acceptable steps.
Part of future work is to find a procedure that identifies a global minimizer.
Our implementation is available from \citep{allaire-orban-2024}.

In each case, inexactness in the proximal operator evaluations is controlled by \(0 < \kappa_s \leq 1\) in \Cref{ass:inexact-decrease}.
For \(\kappa_s \approx 0\), the expectation on the quality of \(\hskcp\) is at its lowest, i.e., \Cref{ass:inexact-decrease} is easiest to satisfy, but~\eqref{eq:stopping-criterion-ir2n} is harder to reach.
Thus, the solver may spend less time inside each (cheap) proximal operator evaluation at the cost of potentially performing more (costly) outer iterations.
On the other hand, when \(\kappa_s \approx 1\), the \(\hskcp\) should be close to an exact solution.
In this case, the solver may spend more time than necessary inside each proximal operator evaluation, which may adversely affect the total solution time.
In our experiments, we vary the value of \(\kappa_s\) to assess the impact of the inexactness on the performance of iR2N.

Step~\ref{stp:sk} in \Cref{alg:ir2n} is performed by a special case of \Cref{alg:ir2n} with \(B = 0\) in which the proximal step computation is the only subproblem.
In effect, that is a variant of the R2 algorithm \citep[Algorithm~6.1]{aravkin-baraldi-orban-2022} extended to the inexact proximal framework.
We refer to this variant as iR2.
Although iR2 is also allowed to perform inexact evaluations of its smooth objective and gradient, we evaluate the quadratic model \(\varphi(s; x_k)\) exactly in our experiments.

Each procedure to solve~\eqref{eq:inexact_prox-lp}--\eqref{eq:inexact_prox-chi} comes with its original stopping condition.
We say that we run iR2N in \emph{exact} mode when we use this original stopping condition, independently of \Cref{ass:inexact-decrease}, and we consider that the resulting proximal operator is then evaluated exactly.
By contrast, we run iR2N in \emph{inexact} mode when the iterations of the proximal operator evaluation are terminated as soon as either (i) \(\|\hskcp\| \geq \kappa_s M_k\), where \(M_k\) is the upper bound on \(\|\skcp\|\) given in~\eqref{eq:bounds-on-lp},~\eqref{eq:bound-on-tvp}, or~\eqref{eq:bound-on-chi}, or (ii) the original stopping condition of the proximal operator evaluation is met.
In proximal operator evaluations, iR2 uses the same value of \(\kappa_s\) as iR2N.

Inequalities~\eqref{eq:link-inexact-measure} suggest using \(\nu_k^{-1}\|\hskcp\| \leq \kappa_s \epsilon\) as stopping criterion in \Cref{alg:ir2n}, since it guarantees that \(\nu_k^{-1} \|\skcp\| \leq \epsilon\).
However, we will see that small values of \(\kappa_s\) yield the best performance but make that stopping condition overly stringent.
In addition, the bound \(M_k\) given in \Cref{lem:bounds-on-lp,lem:bound-on-matrix-regularizer,lem:bound-on-chi} need not be tight, and could indeed be quite loose.
For those reasons, all our experiments use the simple stopping condition
\begin{equation}
  \label{eq:stopping-criterion-ir2n}
  \nu_k^{-1}\|\hskcp\| \leq \epsilon.
\end{equation}

In the next sections, we report the performance of iR2N on problems that use the inexact proximal operators described above.
In \Cref{subsec:bpdn,subsec:matrix_completion,subsec:fitzhugh-nagumo}, both the objective and gradient are assumed to be evaluated exactly, i.e., only subject to the limits of floating-point operations.
In \Cref{sec:inexact-objective-and-gradient}, we consider inexact evaluations of the objective and gradient.
All our tests are performed in double precision on a 2020 MacBook Air with an M1 chip (8-core CPU, 8 GB unified memory).

Because \(f\) in our test problems is based on randomly-generated data, we average the statistics over \(10\) runs.
It is useful to keep in mind that each iR2N and iR2 iteration evaluates a single proximal operator---see Line~\eqref{stp:skcp} of \Cref{alg:ir2n}.
Tables in the next sections use the following headers: ``\(\kappa_s\)'' is the value of the inexactness parameter in \Cref{ass:inexact-decrease}, ``iR2N'' is the average number of outer iterations, ``iR2'' is the average number of inner iterations per outer iteration, ``prox'' is the average number of iterations per proximal operator evaluation, and ``time (s)'' is the average CPU solution time in seconds.

\subsection{Basis pursuit denoising problem (BPDN)}%
\label{subsec:bpdn}

The BPDN problem is stated as
\begin{equation}%
  \label{eq:bpdn_reg}
  \minimize{x \in \R^n} \tfrac{1}{2} \|Ax - b\|_2^2 + \mu \|x\|_p,
\end{equation}
where \(\mu = 10^{-1}\), \(A \in \R^{200 \times 512}\) is random with orthonormal rows, \(b = A \bar{x} + \varepsilon\), \(\bar{x}\) has \(10\) nonzeros, and \(\varepsilon\) is a noise vector from a normal \((0, 1)\) distribution.
We use \(p = 1.1 \) to attempt to recover a sparse solution.
In~\eqref{eq:stopping-criterion-ir2n}, we set \(\epsilon = 10^{-6}\).

\begin{table}[ht]%
  \centering
  \footnotesize
  \caption{Statistics on~\eqref{eq:bpdn_reg} for several values of \(\kappa_s\).}%
  \label{tab:bpdn}
  \begin{tabular}{r r r r r}%
    \hline
    $\kappa_s$        & iR2N             & iR2              & prox             & time (s)         \\
    \hline
    \(1.00\)e\(-07\)  & \(1.61\)e\(+01\) & \(1.21\)e\(+02\) & \(1.02\)e\(+02\) & \(5.03\)e\(+00\) \\
    \(1.00\)e\(-05\)  & \(1.57\)e\(+01\) & \(1.63\)e\(+02\) & \(1.90\)e\(+02\) & \(9.80\)e\(+00\) \\
    \(1.00\)e\(-03\)  & \(1.49\)e\(+01\) & \(1.33\)e\(+01\) & \(4.02\)e\(+02\) & \(1.55\)e\(+01\) \\
    \(1.00\)e\(-02\)  & \(1.49\)e\(+01\) & \(1.78\)e\(+01\) & \(6.02\)e\(+02\) & \(1.77\)e\(+01\) \\
    \(1.00\)e\(-01\)  & \(1.45\)e\(+01\) & \(1.39\)e\(+01\) & \(5.81\)e\(+02\) & \(1.32\)e\(+01\) \\
    \(5.00\)e\(-01\)  & \(1.45\)e\(+01\) & \(1.37\)e\(+01\) & \(5.90\)e\(+02\) & \(1.28\)e\(+01\) \\
    \(9.00\)e\(-01\)  & \(1.45\)e\(+01\) & \(1.39\)e\(+01\) & \(5.80\)e\(+02\) & \(1.25\)e\(+01\) \\
    \(9.90\)e\(-01\)  & \(1.46\)e\(+01\) & \(1.37\)e\(+01\) & \(5.90\)e\(+02\) & \(1.38\)e\(+01\) \\
    \text{exact mode} & \(1.45\)e\(+01\) & \(1.35\)e\(+01\) & \(5.68\)e\(+02\) & \(1.20\)e\(+01\) \\
    \hline
  \end{tabular}
\end{table}

\Cref{tab:bpdn} shows that the average number of iR2N/iR2 iterations decreases globally as \(\kappa_s\) increases.
The proximal operator iterations increase as \(\kappa_s\) increases, as expected.
For small values of \(\kappa_s\), inexact mode yields a substantial reduction in the number of proximal iterations and solution time compared with exact mode at the expense of a modest increase in outer iterations.
For large values of \(\kappa_s\) the behavior of iR2N is close to that of exact mode.

\Cref{fig:bpdn-ir2n} shows that the solutions produced in exact and inexact mode are essentially identical, and that both recover the sparse support of \(\bar{x}\).

\begin{figure}[ht]
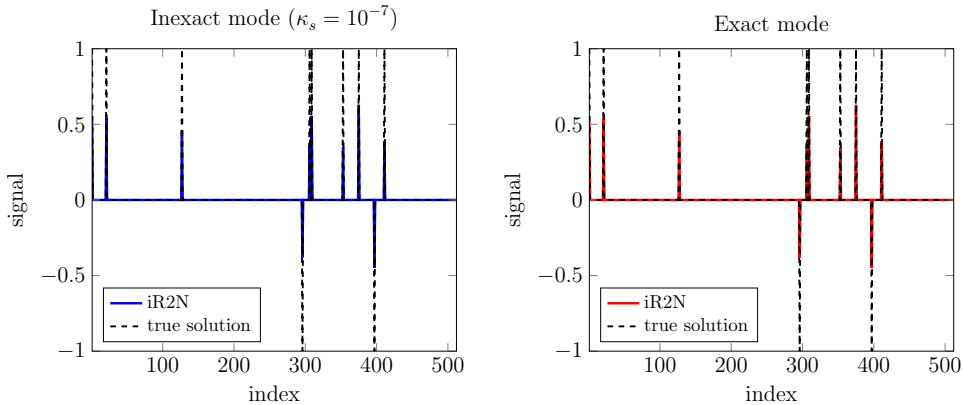
%
  \centering
  \resizebox{\linewidth}{!}{%
    \includetikzgraphics{bpdn-exact-vs-inexact}
  }
  \caption{Components of the solution of~\eqref{eq:bpdn_reg} found by iR2N and of \(\bar{x}\).}%
  \label{fig:bpdn-ir2n}
\end{figure}

\subsection{Matrix completion problem}%
\label{subsec:matrix_completion}

The problem is stated as
\begin{equation}%
  \label{eq:matrix_completion_reg}
  \minimize{x \in \R^n} \tfrac{1}{2} \left\| P(x - A)\right\|_F^2 + \mu \text{TV}_p(x),
\end{equation}
where \(\mu = 10^{-1}\), \(p = 1.1\) and \(A \in \R^{10 \times 12}\) is a fixed matrix representing an image and the operator \(P\) only retains a subset of pixels.
In~\eqref{eq:stopping-criterion-ir2n}, \(\epsilon = 10^{-3}\).

\Cref{tab:matrix_completion} gathers our results on~\eqref{eq:matrix_completion_reg}.
The benefits of choosing \(\kappa_s\) small are similar to those in \Cref{tab:bpdn}.
\Cref{fig:matcomp-ir2n} shows that the reconstruction error with the solutions of exact and inexact mode are close, as is the discrepancy between the two solutions.

\begin{table}[ht]%
  \centering
  \footnotesize
  \caption{Statistics on~\eqref{eq:matrix_completion_reg} for several values of \(\kappa_s\).}%
  \label{tab:matrix_completion}
  \begin{tabular}{r r r r r}%
    \hline
    $\kappa_s$        & iR2N             & iR2              & prox             & time (s)         \\
    \hline
    \(1.00\)e\(-07\)  & \(3.69\)e\(+01\) & \(3.41\)e\(+02\) & \(5.88\)e\(+02\) & \(9.46\)e\(+01\) \\
    \(1.00\)e\(-05\)  & \(3.72\)e\(+01\) & \(3.03\)e\(+02\) & \(8.71\)e\(+02\) & \(1.42\)e\(+02\) \\
    \(1.00\)e\(-03\)  & \(3.69\)e\(+01\) & \(2.09\)e\(+02\) & \(3.76\)e\(+03\) & \(3.54\)e\(+02\) \\
    \(1.00\)e\(-02\)  & \(3.77\)e\(+01\) & \(2.12\)e\(+02\) & \(4.06\)e\(+03\) & \(3.73\)e\(+02\) \\
    \(1.00\)e\(-01\)  & \(3.41\)e\(+01\) & \(1.90\)e\(+02\) & \(4.37\)e\(+03\) & \(3.25\)e\(+02\) \\
    \(5.00\)e\(-01\)  & \(3.56\)e\(+01\) & \(2.19\)e\(+02\) & \(4.31\)e\(+03\) & \(3.54\)e\(+02\) \\
    \(9.00\)e\(-01\)  & \(3.77\)e\(+01\) & \(1.81\)e\(+02\) & \(4.49\)e\(+03\) & \(3.57\)e\(+02\) \\
    \(9.90\)e\(-01\)  & \(3.55\)e\(+01\) & \(2.01\)e\(+02\) & \(4.27\)e\(+03\) & \(3.54\)e\(+02\) \\
    \text{exact mode} & \(3.18\)e\(+01\) & \(1.67\)e\(+02\) & \(4.49\)e\(+03\) & \(3.36\)e\(+02\) \\
    \hline
  \end{tabular}
\end{table}

\begin{figure}[ht]
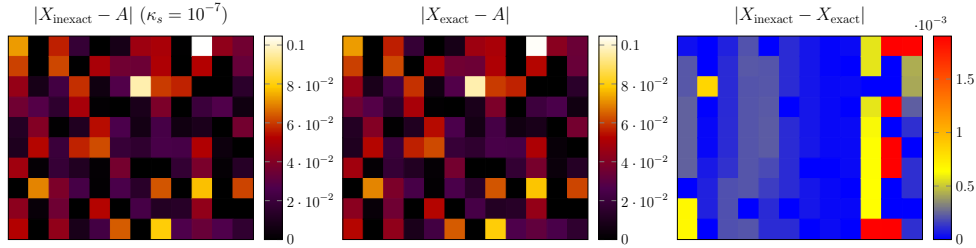
%
  \centering
  \resizebox{\linewidth}{!}{%
    \includetikzgraphics{demo-matcomp-comparison}
  }
  \caption{Left: Heatmap of the difference between the solution \(X\) found by iR2N in inexact and exact mode, and \(A\).
    Right: Difference between the two solutions.
    The values masked by \(P\) are set to zero and shown in black.}%
  \label{fig:matcomp-ir2n}
\end{figure}

\subsection{Fitzhugh-Nagumo inverse problem}%
\label{subsec:fitzhugh-nagumo}

The FitzHugh–Nagumo system is a simplified representation of a neuron's action potential modeled by the system of differential equations
\begin{equation}%
  \label{eq:fitzhugh_nagumo}
  \dot{v}(t) = x_2^{-1} (v(t) - \tfrac{1}{3} v(t)^3 - w(t) + x_1), \quad
  \dot{w}(t) = x_2 (x_3 v(t) - x_4 w(t) + x_5).
\end{equation}
We use initial conditions \(v(0) = 2\) and \(w(0) = 0\), and generate data \(\bar{v}(x), \bar{w}(x)\) by solving~\eqref{eq:fitzhugh_nagumo} with \(\bar{x} = (0, 0.2, 1, 0, 0)\), which corresponds to the Van der Pol oscillator, to which we add random noise.
We then aim to recover \(\bar{x}\) by minimizing the misfit while encouraging a sparse solution:
\begin{equation}%
  \label{eq:fitzhugh_nagumo_reg}
  \min_{x \in \R^5} \tfrac{1}{2} \left\| F(x) \right\|_2^2 + \chi_{p, r}(x),
\end{equation}
where \(p = 0.5\), \(r = 2\), \(F : \R^5 \to \R^{2n+2}\), \(F(x) \defeq (v(x) - \bar{v}(x), w(x) - \bar{w}(x))\), and \(v(x) = (v_1(x), \ldots, v_{n+1}(x))\) and \(w(x) = (w_1(x), \ldots, w_{n+1}(x))\) are sampled values of \(V\) and \(W\) at \(n+1\) discretization points.
We set \(\epsilon = 10^{-5}\) in~\eqref{eq:stopping-criterion-ir2n}.
\Cref{tab:fh} reports our results.

\begin{table}[ht]%
  \centering
  \footnotesize
  \caption{Statistics on~\eqref{eq:fitzhugh_nagumo_reg} for \(p = \tfrac{1}{2}\) and \(r = 2\) with several values of \(\kappa_s\).}%
  \label{tab:fh}
  \begin{tabular}{r r r r r}%
    \hline
    $\kappa_s$        & iR2N             & iR2              & prox             & time (s)         \\
    \hline
    \(1.00\)e\(-07\)  & \(5.14\)e\(+02\) & \(4.90\)e\(+02\) & \(3.51\)e\(-01\) & \(5.28\)e\(+00\) \\
    \(1.00\)e\(-05\)  & \(5.72\)e\(+02\) & \(4.64\)e\(+02\) & \(4.62\)e\(-01\) & \(5.21\)e\(+00\) \\
    \(1.00\)e\(-03\)  & \(6.31\)e\(+02\) & \(5.47\)e\(+02\) & \(5.96\)e\(-01\) & \(5.56\)e\(+00\) \\
    \(1.00\)e\(-02\)  & \(5.71\)e\(+02\) & \(4.81\)e\(+02\) & \(6.22\)e\(-01\) & \(5.17\)e\(+00\) \\
    \(1.00\)e\(-01\)  & \(4.95\)e\(+02\) & \(4.89\)e\(+02\) & \(4.11\)e\(-01\) & \(5.85\)e\(+00\) \\
    \(5.00\)e\(-01\)  & \(4.90\)e\(+02\) & \(4.59\)e\(+02\) & \(1.94\)e\(+00\) & \(6.42\)e\(+00\) \\
    \(9.00\)e\(-01\)  & \(5.12\)e\(+02\) & \(4.98\)e\(+02\) & \(2.06\)e\(+00\) & \(6.53\)e\(+00\) \\
    \(9.90\)e\(-01\)  & \(5.24\)e\(+02\) & \(5.09\)e\(+02\) & \(1.91\)e\(+00\) & \(6.84\)e\(+00\) \\
    \text{exact mode} & \(4.92\)e\(+02\) & \(5.03\)e\(+02\) & \(3.92\)e\(+01\) & \(6.88\)e\(+00\) \\
    \hline
  \end{tabular}
\end{table}

The small number of iterations per proximal call arises from the fact that \(\chi_{p,r}\) is an indicator; the projection of a point that already belongs to the set requires zero iterations.
The value of \(\kappa_s\) has little effect on the number of iR2N/iR2 iterations.
As in \Cref{subsec:bpdn,subsec:matrix_completion}, inexact mode yields a reduction in computational cost, though more modest because the smooth objective and its gradient are costlier in~\eqref{eq:fitzhugh_nagumo_reg} than in~\eqref{eq:bpdn_reg} or~\eqref{eq:matrix_completion_reg}.
Thus, the reduction in proximal evaluations must outweigh the increase in outer iterations.
\Cref{tab:fitzhugh_nagumo_sol} gives the approximate solution identified by the exact and inexact variants, and the final value of the smooth objective.
Although both exact and inexact mode recover a solution that has one more nonzero than \(\bar{x}\), the final smooth objective values are close to that at \(\bar{x}\).
\Cref{fig:fitzhugh_nagumo-ir2n} plots the simulation of~\eqref{eq:fitzhugh_nagumo} with parameters found by iR2N with \(\kappa_s = 1.0\)e\(-07\) when solving~\eqref{eq:fitzhugh_nagumo_reg}.
The solutions with exact and inexact mode are indistinguishable.


\begin{table}[ht]%
  \centering
  \footnotesize
  \caption{Approximate solutions of~\eqref{eq:fitzhugh_nagumo_reg} found by the exact and inexact variants with \(\kappa_s = 1.0\)e\(-07\).
    The last column shows the smooth objective value at the solution.
  }%
  \label{tab:fitzhugh_nagumo_sol}
  \begin{tabular}{l|rrrrr|r}
    \hline
            & \multicolumn{5}{c|}{\(x\)} & \(\tfrac{1}{2} \|F(x)\|^2\)                                                                              \\
    \hline
    True    & \(0.00\)e\(+00\)           & \(2.00\)e\(-01\)            & \(1.00\)e\(+00\) & \(0.00\)e\(+00\)  & \(0.00\)e\(+00\) & \(8.82\)e\(-01\) \\
    Inexact & \(0.00\)e\(+00\)           & \(2.00\)e\(-01\)            & \(9.98\)e\(-01\) & \(-1.00\)e\(-02\) & \(0.00\)e\(+00\) & \(8.96\)e\(-01\) \\
    Exact   & \(0.00\)e\(+00\)           & \(2.00\)e\(-01\)            & \(9.98\)e\(-01\) & \(-1.00\)e\(-02\) & \(0.00\)e\(+00\) & \(8.96\)e\(-01\) \\
    \hline
  \end{tabular}
\end{table}

\begin{figure}[ht]
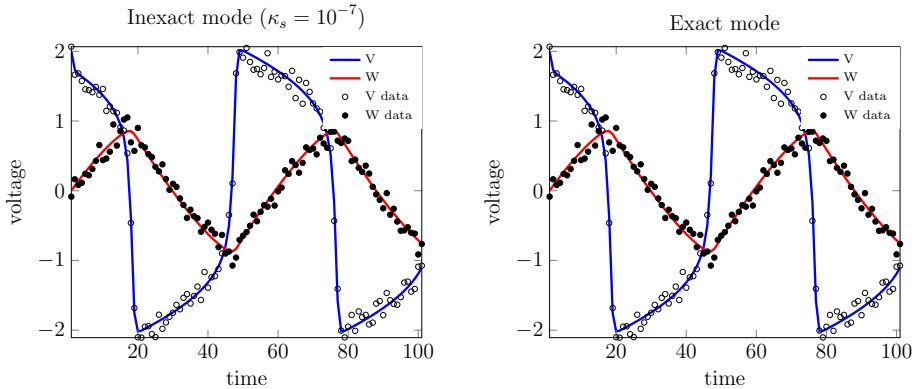
%
  \centering
  \resizebox{.95\linewidth}{!}{%
    \includetikzgraphics{fh-ir2n-exact-inexact}
  }
  \caption{Simulation of~\eqref{eq:fitzhugh_nagumo} with solutions of~\eqref{eq:fitzhugh_nagumo_reg} found by iR2N.}%
  \label{fig:fitzhugh_nagumo-ir2n}
\end{figure}

\subsection{Inexact objective and gradient evaluations}%
\label{sec:inexact-objective-and-gradient}

We now consider inexact evaluations of the smooth objective and its gradient.
In~\eqref{eq:fitzhugh_nagumo_reg}, each evaluation of \(F\) involves solving an ODE system numerically, which inherently depends on a stopping tolerance that introduces an approximation error.
We use the \citet{verner-2010} \(9/8\) optimal Runge-Kutta method as implemented in \citep{differentialequations.jl-2017}.
In our implementation of \(F\), the accuracy of the ODE solve can be adjusted via a parameter \texttt{prec} \(> 0\) that sets the absolute and relative stopping tolerances.
The gradient is computed via automatic differentiation, and hence, its accuracy also depends on \texttt{prec}.
Decreasing this tolerance improves the accuracy of the objective and gradient but increases the computational cost.
The results of \Cref{subsec:fitzhugh-nagumo} used \(\texttt{prec} = 10^{-14}\) as the reference ``exact'' objective and gradient evaluations.

Because \Cref{ass:bound-inexact-f-nf} may not be easily verifiable in practice, we propose a heuristic inspired from trust-region methods for derivative-free optimization \citep[chapter 10]{conn-scheinberg-vicente-2009}, that consists in adapting the accuracy based on the progress of the algorithm.
More precisely, we increase the accuracy on unsuccessful iterations, i.e., \(\rho_k < \eta_1\) in \Cref{alg:ir2n}.
At iteration \(k\), we set \texttt{prec} to
\begin{equation}
  \label{eq:prec_k_failure}
  \texttt{prec}(k) \defeq \max(10^{-3} \, \exp(\log(10^{-14} / 10^{-3}) \, n_F/N), \, 10^{-14}),
\end{equation}
where \(N\) is a preset maximum number of unsuccessful iterations after which \texttt{prec} \(= 10^{-14}\) is always used, and \(n_F\) counts the number of unsuccessful iterations.
Small values of \(N\) lead to a rapid increase in accuracy, whereas larger ones maintain low-accuracy evaluations over more iterations.
Though~\eqref{eq:prec_k_failure} may not guarantee \Cref{ass:bound-inexact-f-nf} at every iteration, the objective and gradient accuracy improves as the algorithm progresses, as required by the assumption.

We focus on~\eqref{eq:fitzhugh_nagumo_reg} with the setting of \Cref{subsec:fitzhugh-nagumo} and we use~\eqref{eq:prec_k_failure} for inexact objective and gradient.
We vary the value of \(N\) with fixed \(\kappa_s = 10^{-7}\) in \Cref{tab:fitzhugh_nagumo_inexact_exact_prox}.

\begin{table}[ht]%
  \centering
  \footnotesize
  \caption{Iterations and time on~\eqref{eq:fitzhugh_nagumo_reg} with inexact objective and gradient evaluations.}%
  \label{tab:fitzhugh_nagumo_inexact_exact_prox}
  \begin{tabular}{r r r r r r}
    \hline
    \(N\)       & fail rate & iter iR2N        & iter iR2         & prox             & time (s)         \\
    \hline
    exact \(F\) & 0\%       & \(5.14\)e\(+02\) & \(4.90\)e\(+02\) & \(3.51\)e\(-01\) & \(5.28\)e\(+00\) \\
    \(20\)      & 0\%       & \(5.66\)e\(+02\) & \(5.10\)e\(+02\) & \(4.55\)e\(-01\) & \(5.16\)e\(+00\) \\
    \(50\)      & 20\%      & \(6.36\)e\(+02\) & \(5.07\)e\(+02\) & \(3.77\)e\(-01\) & \(4.31\)e\(+00\) \\
    \(100\)     & 30\%      & \(6.31\)e\(+02\) & \(5.08\)e\(+02\) & \(3.46\)e\(-01\) & \(3.27\)e\(+00\) \\
    \(200\)     & 80\%      & \(6.67\)e\(+02\) & \(5.47\)e\(+02\) & \(3.69\)e\(-01\) & \(2.46\)e\(+00\) \\
    \hline
  \end{tabular}
\end{table}

The first line of \Cref{tab:fitzhugh_nagumo_inexact_exact_prox} reports the number of iterations and the solution time obtained with ``exact'' objective and gradient.
Lines~2--5 use~\eqref{eq:prec_k_failure} for several values of \(N\).
As \(N\) increases, iR2N spends a larger fraction of its iterations in a \emph{low}-precision regime, making it increasingly likely that \Cref{ass:bound-inexact-f-nf} is violated.
When iR2N operates with insufficient accuracy for too long, the algorithm may eventually stall, cease to make progress, and reach the maximum number of allowed iterations.
The second column of \Cref{tab:fitzhugh_nagumo_inexact_exact_prox} reports the proportion of such failed runs over ten trials.
Importantly, the iteration and timing statistics shown in \Cref{tab:fitzhugh_nagumo_inexact_exact_prox} correspond \emph{only} to the successful runs.
The failure rate increases with \(N\), and for \(N = 200\) few runs succeed.
Moderate values of \(N\) yield significant benefits in terms of solution time.


In \Cref{tab:fitzhugh_nagumo_inexact}, we report the performance of \Cref{alg:ir2n} using inexact objective, gradient and proximal operator evaluations following rule~\eqref{eq:prec_k_failure} on~\eqref{eq:fitzhugh_nagumo_reg} with \(N = 100\).
The number of iR2N, iR2 and proximal iterations is globally unaffected by inexact evaluations, but the latter yield significant savings in terms of solution time.

\begin{table}[ht]%
  \centering
  \footnotesize
  \caption{Statistics on~\eqref{eq:fitzhugh_nagumo_reg} with increasing accuracy given by~\eqref{eq:prec_k_failure} with \(N = 100\) and several values of \(\kappa_s\).
    Each entry reports the multiplicative gain or loss compared to the reference values in \Cref{tab:fh}.
    A value smaller than \(1\) indicates a gain.}%
  \label{tab:fitzhugh_nagumo_inexact}
  \begin{tabular}{r r r r r}%
    \hline
    $\kappa_s$            & iR2N             & iR2              & prox             & time (s)         \\
    \hline
    \(1.00\)e\(-07\)      & \(1.23\)e\(+00\) & \(1.04\)e\(+00\) & \(9.90\)e\(-01\) & \(6.20\)e\(-01\) \\
    \(1.00\)e\(-05\)      & \(1.08\)e\(+00\) & \(1.02\)e\(+00\) & \(1.38\)e\(+00\) & \(4.90\)e\(-01\) \\
    \(1.00\)e\(-03\)      & \(8.40\)e\(-01\) & \(7.70\)e\(-01\) & \(5.50\)e\(-01\) & \(2.70\)e\(-01\) \\
    \(1.00\)e\(-02\)      & \(1.00\)e\(+00\) & \(1.00\)e\(+00\) & \(1.10\)e\(+00\) & \(3.60\)e\(-01\) \\
    \(1.00\)e\(-01\)      & \(1.11\)e\(+00\) & \(9.60\)e\(-01\) & \(5.20\)e\(-01\) & \(3.00\)e\(-01\) \\
    \(5.00\)e\(-01\)      & \(9.90\)e\(-01\) & \(9.20\)e\(-01\) & \(1.19\)e\(+00\) & \(2.50\)e\(-01\) \\
    \(9.00\)e\(-01\)      & \(1.03\)e\(+00\) & \(8.80\)e\(-01\) & \(1.17\)e\(+00\) & \(3.00\)e\(-01\) \\
    \(9.90\)e\(-01\)      & \(9.40\)e\(-01\) & \(8.30\)e\(-01\) & \(1.36\)e\(+00\) & \(2.50\)e\(-01\) \\
    \hline
    \text{average factor} & \(1.03\)e\(+00\) & \(9.30\)e\(-01\) & \(1.03\)e\(+00\) & \(3.60\)e\(-01\) \\
    \hline
  \end{tabular}
\end{table}

\section{Discussion}%
\label{sec:discussion}

Method iR2N subsumes R2N \citep{diouane-habiboullah-orban-2024} by allowing inexact evaluations of the objective, its gradient, and the proximal operator.
Under usual global convergence conditions, we showed that inexact evaluations and proximal operators do not deteriorate asymptotic complexity compared to methods that use exact evaluations.
Our assumptions on the inexactness of \(f\) and \(\nabla f\) are standard.

\Cref{ass:inexact-decrease} on the inexact evaluation of proximal operators differs in nature from Definitions~(ii) and~(iii) of \citep{salzo-villa-2012}.
Their Definition~(i), also used in \citep{rockafellar-1976}, can be written \(\|\hskcp - \skcp\| \leq \epsilon_k\) for at least one \(\skcp\), where \(\{\epsilon_k\}\) is positive and summable.
It is equivalent to \(\|\skcp\| - \epsilon_k \leq \|\hskcp\| \leq \|\skcp\| + \epsilon_k\), which is strictly stronger than \Cref{ass:inexact-decrease} in that we only require one of the inequalities.
Moreover, we use the specific value \(\epsilon_k = (1 - \kappa_s) \|\skcp\|\), which need not be summable.
Indeed, by the same reasoning as in the proof of \Cref{lem:inf-succ}, for any successful iteration \(k\), there exists a Cauchy step \(\skcp\) such that
\begin{align*}
  (f + h)(x_k) - (f + h)(x_k + s_k) & \geq \tfrac{1}{2} \eta_1 (1 - \theta_1) \nu_k^{-1} \|\hskcp\|^2
  \\
                                    & \geq \tfrac{1}{2} \eta_1 (1 - \theta_1) \nu_{\max}^{-1} \|\hskcp\|^2
  \\
                                    & \geq \tfrac{1}{2} \eta_1 (1 - \theta_1) \nu_{\max}^{-1} \kappa_s^2 \|\skcp\|^2.
\end{align*}
Therefore, if we sum those inequalities over the set \(\mathcal{S}\) of all successful iterations and use \Cref{ass:fh-lower-bound}, we obtain
\[
  (f + h)(x_0) - (f + h)_{\text{low}} \geq \tfrac{1}{2} \eta_1 (1 - \theta_1) \nu_{\max}^{-1} \kappa_s^2 \sum_{k \in \mathcal{S}} \|\skcp\|^2.
\]
A similar inequality holds for \(\hskcp\).
Thus, both \(\{\|\hskcp\|\}\) and \(\{\|\skcp\|\}\) are square summable.
However, showing that they are summable appears to require the stronger Kurdyka-\L{}ojasiewicz assumption \citep[Theorem~\(1\)]{bolte-sabach-teboulle-2014}, which is not used in our analysis.

iR2N naturally generalizes the special cases R2 \citep{aravkin-baraldi-orban-2022} with \(B(x) = 0\), R2DH \citep{diouane-habiboullah-orban-2024} with \(B(x)\) diagonal, and LM \citep{aravkin-baraldi-orban-2024} when \(f\) is a squared residual norm and \(B(x) = J(x) J(x)^T\), where \(J(x)\) is the residual Jacobian.
It stands to reason that the same mechanisms can be used to extend the trust-region variants TR \citep{aravkin-baraldi-orban-2022}, TRDH \citep{leconte-orban-2025}, and LMTR \citep{aravkin-baraldi-orban-2024} to inexact evaluations and proximal operators with minimal modifications.

Numerical experiments confirm that iR2N provides substantial flexibility in contexts where exact evaluations are expensive or unavailable, and demonstrate that controlled inexactness can be leveraged to reduce computational cost without compromising convergence behavior.

Future work will focus on allowing inexact evaluations of the quadratic model~\eqref{eq:phi-second-order}, particularly regarding \(B_k\), which itself may be computed inexactly---for instance, when represented in reduced numerical precision or when linear systems involving \(B_k\) are solved approximately.





\small
\bibliographystyle{abbrvnat}
\bibliography{abbrv,ir2n}
\normalsize

\newpage

\hypertarget{contents}{}  
\tableofcontents

\end{document}